\def\ward{\mathop{\mbox{\textsl{W}}}\nolimits}
\newcommand{\superbinom}{\genfrac{\langle}{\rangle}{0pt}{}}
\newcommand{\numbersp}[2]{\bigg(\!\!\dbinom{#1}{#2}\!\!\bigg)}
\newcommand{\numberspil}[2]{\ensuremath{\big(\!\tbinom{#1}{#2}\!\big)}}
\newcommand{\C}{\mathbb{C}}
\newcommand{\N}{\mathbb{N}}
\newcommand{\R}{\mathbb{R}}
\newtheorem{theorem}{Theorem}
\newtheorem{definition}{Definition}
\newtheorem{proposition}{Proposition}
\newtheorem{example}{Example}
\newtheorem{remark}{Remark}
\begin{document}

\title{\textbf{Ward-Fonten\'e Differential Universal Algebras}}
\author{Ronald Orozco L\'opez}

\newcommand{\Addresses}{{
  \bigskip
  \footnotesize

  \textit{E-mail address}, R.~Orozco: \texttt{rj.orozco@uniandes.edu.co}
  
}}

\maketitle

\begin{abstract}
In this paper a Ward-Fonten\'e differential universal algebra is constructed. In this algebra it is possible to obtain a product $\psi$-rule and a general $\psi$-rule of Leibniz for any calculus on sequences. In particular, the simplicial polytopic calculus and the calculus on Bell numbers are introduced.
\end{abstract}
{\bf Keywords:} algebra of Ward-Fonten\'e, Fonten\'e products, product $\psi$-rule, simplicial polytopic calculus, calculus on Bell number, binomial operator\\
{\bf Mathematics Subject Classification:} 11B65, 11B39, 13F25

\section{Introduction}
Fonten\'e in \cite{font} published a paper in which he generalized the binomial coefficients by replacing $\binom{n}{k}=\frac{n(n-1)\cdots(n-k+1)}{1\cdot2\cdots k}$, consisting of natural numbers, with $\binom{n}{k}_{\psi}=\frac{\psi_{n}\psi_{n-1}\cdots\psi_{n-k+1}}{\psi_{1}\psi_{2}\cdots\psi_{k}}$, formed by an arbitrary sequence $\psi=\{\psi_{n}\}$ of real or complex numbers. He gave a fundamental recurrence relation for these coefficients such that when we make $\psi_{n}=n$ we recover the ordinary binomial coefficients and when we make $\psi_{n}=[n]_{q}=\frac{q^{n}-1}{q-1}$ we recover the $q$-binomial coefficients studied by Gauss, Euler, Jackson and others.

Subsequently, Ward in \cite{ward} developed a calculus on sequences $\psi=\{\psi_{n}\}$ with $\psi_{0}=0$, $\psi_{1}=1$ and $\psi_{n}\neq0$ for all $n\geq1$, and thus generalized the ordinary calculus and the $q$-calculus of Jackson (See \cite{kac}). Other well-studied calculus emerged from his work, the $(p,q)$-calculus and the Fibonomial calculus, where $\psi_{n}=F_{n}$ is the Fibonacci sequence defined recursively by $F_{n+1}=F_{n}+F_{n-1}$, $F_{0}=0$, $F_{1}=1$. For more details on some works on this subject see \cite{benjamin}, \cite{gould}, \cite{kus}, \cite{trojov}, \cite{tuglu}, \cite{pashaev_1}, \cite{pashaev_2}, \cite{pashaev_3}, \cite{ozvatan}.

The $q$-calculus was initiated by Euler \cite{euler} in the 1740s with the study of partitions or additive number theory. Also Gauss, \cite{gauss_1},\cite{gauss_2}, was involved in the $q$-calculus since he studied the $q$-hypergeometric series and the $q$-analogue of binomial formula. However, who actually studied the $q$-calculus systematically was Jackson, \cite{jackson},\cite{jackson_2},\cite{jackson_3},\cite{jackson_4},\cite{jackson_5},\cite{jackson_6},\cite{jackson_7}, introducing the $q$-difference operator, some $q$-functions and the $q$-analogs of the integral. Since then there has been an extensive number of articles and books devoted to the $q$-calculus and its applications in mathematics and physics, for example, in the theory of special functions, difference and differential equations, combinatorics, analytic number theory, quantum theory, quantum group, numerical analysis, operator theory and other related theories. Recently, a new calculus, extension of the $q$-calculus, called the $(p,q)$-calculus or Post Quantum Calculus, was developed by Chakrabarti and Jagannathan \cite{jag}, Brodimas et. al. \cite{brodi}, Wachs and White \cite{wahcs}, and Arik et. al. \cite{arik}. We have the following definition of $(p,q)$-number
\begin{equation}
    [n]_{p,q}=\frac{p^n-q^n}{p-q}.
\end{equation}
When $p=1$, the $(p,q)$-numbers reduce to the $q$-numbers $[n]_{q}$. In general, the $(p,q)$-calculus is reduced to the $q$-calculus when $p=1$. The $(p,q)$-derivative of a function $f$ is defined by
\begin{equation}\label{eqn_pq_diff}
    \mathbf{D}_{p,q}f(x)=
    \begin{cases}
    \frac{f\left(px\right)-f\left(qx\right)}{(p-q)x},&\text{ if } x\neq0;\\
    f^{\prime}(0), &\text{ if } x=0
    \end{cases}
\end{equation}
provided that $f$ is differentiable at 0. When $p=1$, we get the $q$-Jackson derivative
\begin{equation*}
    \mathbf{D}_{q}f(x)=\frac{f(qx)-f(x)}{(q-1)x}.
\end{equation*}

Let $\psi=\{\psi_{n}\}$ be a sequence of complex numbers with $\psi_{0}=0$, $\psi_{1}=1$ and $\psi_{n}\neq0$ for $n\geq2$. We define the $\psi$-factorial numbers as $\psi_{n}!=\psi_{1}\psi_{2}\cdots\psi_{n}$, where $\psi_{0}!=1$. If $\psi_{n}=[n]$, we obtain the $q$-factorials and if $\psi_{n}=F_{n}$, we obtain the $F$-factorials $F_{n}!=F_{1}F_{2}\cdots F_{n}$ with $F_{0}!=1$. The $\psi$-binomial coefficients are given by
\begin{equation*}
    \binom{n}{k}_{\psi}=\frac{\psi_{n}}{\psi_{k}!\psi_{n-k}!}
\end{equation*}
which satisfy the symmetry relation
\begin{equation}\label{eqn_simm}
    \binom{n}{k}_{\psi}=\binom{n}{n-k}_{\psi},\ \ \ 0\leq k\leq n.
\end{equation}
When $\psi_{n}$ is the Fibonacci sequence, the $\psi_{n}$-binomial coefficients are known as Fibonomial coefficients.

Now we define the $\psi$-difference
\begin{equation*}
    \psi_{n}-\psi_{k}=F(n,k)\psi_{n-k}
\end{equation*}
where $F(n,k)$ are appropriate complex numbers. When $\psi_{n}=n$, then $F(n,k)=1$ and when $\psi_{n}=[n]_{q}$, then $F(n,k)=q^k$. The recurrence relation provided by Fonten\'e \cite{font} is given by
\begin{equation}\label{eqn_psi_binom}
    \binom{n+1}{k}_{\psi}=\binom{n}{k-1}_{\psi}+F(n+1,k)\binom{n}{k}_{\psi}.
\end{equation}
If we change $n$ to $n-k$ and apply (\ref{eqn_simm}), we will find that
\begin{equation}\label{eqn_psi_binom_sim}
    \binom{n+1}{k}_{\psi}=\binom{n}{k}_{\psi}+F(n+1,n-k+1)\binom{n}{k-1}_{\psi}
\end{equation}
is the symmetric version of (\ref{eqn_psi_binom}).

On the other hand, if we make $k=n$ in (\ref{eqn_psi_binom}), we will obtain the recurrence relation that the sequence $\psi_{n}$ must fulfill
\begin{equation*}
    \psi_{n+1}=\psi_{n}+F(n+1,n).
\end{equation*}
Thus, we have
\begin{equation*}
    F(n+1,n)=
    \begin{cases}
    1,&\text{ if $\psi_{n}=n$;}\\
    q^{n},&\text{ if $\psi_{n}=[n]_{q}$;}\\
    F_{n-1},&\text{ if $\psi_{n}=F_{n}$.}
    \end{cases}
\end{equation*}

Now we define the $\psi$-derivative as $\mathbf{D}_{\psi}x^n=\psi_{n}x^{n-1}$. When $\psi_{n}=n$, the following product rule is known
\begin{eqnarray*}
    \mathbf{D}(f(x)g(x))&=&(\mathbf{D}f)(x)g(x)+f(x)(\mathbf{D}g)(x)
\end{eqnarray*}
and when $\psi_{n}=[n]_{p,q}$ the product is
\begin{eqnarray*}
    \mathbf{D}_{p,q}(f(x)g(x))&=&(\mathbf{D}_{p,q}f)(x)g(px)+f(qx)(\mathbf{D}_{p,q}g)(x)\\
    &=&(\mathbf{D}_{p,q}f)(x)g(qx)+f(px)(\mathbf{D}_{p,q}g)(x).
\end{eqnarray*}
As will be shown later in this paper, it is possible to introduce two new calculus, one on simplicial polytopic numbers and the other on Bell numbers. However, in neither of them is it possible to construct a product rule based on ordinary products of functions, so the main goal of this paper is to construct new products where such a rule makes sense and where it is possible to obtain the above results. The definition of these products will be based on the numbers $F(n,k)$ and will therefore be called Fonten\'e products. 

This article is divided as follows. In the second section, we introduce two new calculus, the simplicial polytopic calculus and the calculus on Bell numbers. In the third section, the Ward-Fonten\'e 
universal algebras will be constructed. This algebra will have as members the $\psi$-exponential generating functions. On this algebras we will define a family of Fonten\'e products of $\psi$-exponential generating functions by using the numbers $F(n,k)$ as the kernel of each product. Subsequently we show that this family of products define an algebra of products defined on the set of $\psi$-exponential generating functions. In the fourth section, we will use the numbers $F(n,n-k)$ to construct the Ward-Fonten\'e opposites universal algebras. In the following section we will show that the above algebras are differential algebras. In the sixth section, the general $\psi$-rule of Leibniz will be given. Here the binomial coefficients on Fonten\'e products are introduced. Throughout this article we will suppose that $\N$ contains zero.

\section{New calculus on sequences}
In this section we introduce new differential calculus defined on simplicial polytopic numbers and on Bell numbers. The main idea is to show that their respective derivatives satisfy the identity $\mathbf{D}_{\psi}x^n=\psi_{n}x^{n-1}$, but do not satisfy Leibniz's rule using only ordinary products.
\subsection{Simplicial Polytopic Calculus}
The simplicial polytopic numbers are a family of sequences of figurate numbers corresponding to the $d$-dimensional simplex for each dimension $d$, where $d$ is a non-negative integer. For $d$ ranging from $1$ to $5$, we have the following simplicial polytopic numbers, respectively: non-negative numbers $\N$, triangular numbers $\mathrm{T}$, tetrahedral numbers $\mathrm{Te}$, pentachoron numbers $\mathrm{P}$ and hexateron numbers $\mathrm{H}$. A list of the above sets of numbers is as follows:
\begin{align*}
    \N&=\{0,1,2,3,4,5,6,7,8,9,...\},\\
    \mathrm{T}&=\{0,1,3,6,10,15,21,28,36,45,55,66,...\},\\
    \mathrm{Te}&=\{0,1,4,10,20,35,56,84,120,165,...\},\\
    \mathrm{P}&=\{0,1,5,15,35,70,126,210,330,495,715,...\},\\
    \mathrm{H}&=\{0,1,6,21,56,126,252,462,792,1287,...\}.
\end{align*}
The $n$-th simplicial $d$-polytopic numbers are given by the formulae
\begin{equation}
    \binom{n+d-1}{d}=\numbersp{n}{d},
\end{equation}
and
\begin{equation}
    \numbersp{n+1}{d}=\numbersp{n}{d}+\numbersp{n+1}{d-1}.
\end{equation}
From Vandermonde's identity
\begin{equation}\label{eqn_vand}
    \numbersp{n}{d}=\sum_{k=1}^{d}\binom{d-1}{k-1}\binom{n}{k}
\end{equation}
where $\binom{\cdot}{\cdot}$ are the binomial coefficients.

Cigler \cite{cigler} introduced the following definitions.
\begin{definition}
The simplicial $d$-polytorial is defined by
\begin{equation}
    \numberspil{n}{d}!=\prod_{k=1}^{n}\numberspil{k}{d},\ n\geq1,\ \numberspil{0}{d}!=1.
\end{equation}
Let us introduce also the simplicial $d$-polytonomial coefficients
\begin{equation}
    \binom{n}{k}_{S_{d}}=\frac{\numberspil{n}{d}!}{\numberspil{k}{d}!\numberspil{n-k}{d}!}.
\end{equation}
\end{definition}

\begin{definition}
Take $f(x)\in C^{d}(X)$. Define the simplicial $d$-polytopic derivative $\mathbf{D}_{S_{d}}f(x)$ of the function $f(x)$ as
\begin{equation}\label{eqn_der_simp}
    \mathbf{D}_{S_{d}}f(x)=\sum_{k=0}^{d-1}\binom{d-1}{k}\frac{x^{k}}{(k+1)!}\frac{d^{k+1}}{dx^{k+1}}f(x).
\end{equation}
Some specializations are:
\begin{enumerate}
    \item For $d=1$, we obtain the usual derivative $\mathbf{D}_{S_{1}}=\frac{d}{dx}$.
    \item For $d=2$, we define the triangular derivative 
    \begin{equation*}
    \mathbf{D}_{T}=\mathbf{D}_{S_{2}}=\frac{x}{2}\frac{d^2}{dx^2}+\frac{d}{dx}.
    \end{equation*}
    \item For $d=3$, we define the tetrahedral derivative 
    \begin{equation*}
    \mathbf{D}_{Te}=\mathbf{D}_{S_{3}}=\frac{x^2}{6}\frac{d^3}{dx^3}+x\frac{d^2}{dx^2}+\frac{d}{dx}.
    \end{equation*}
    \item For $d=4$, we define the pentachoron derivative
    \begin{equation*}
    \mathbf{D}_{P}=\mathbf{D}_{S_{4}}=\frac{x^3}{24}\frac{d^4}{dx^4}+\frac{x^2}{2}\frac{d^3}{dx^3}+\frac{3x}{2}\frac{d^2}{dx^2}+\frac{d}{dx}.
    \end{equation*}
    \item For $d=5$, we define the hexateron derivative
    \begin{equation*}
    \mathbf{D}_{H}=\mathbf{D}_{S_{5}}=\frac{x^4}{120}\frac{d^5}{dx^5}+\frac{x^3}{6}\frac{d^4}{dx^4}+x^2\frac{d^3}{dx^3}+2x\frac{d^2}{dx^2}+\frac{d}{dx}.
    \end{equation*}
\end{enumerate}
\end{definition}
It is straightforward to show that the derivative $\mathbf{D}_{S_{d}}$ is a linear operator.
\begin{proposition}
For any pair of functions $f$ and $g$ in $C^{d}(X)$ and for all $c\in\R$
\begin{enumerate}
    \item $\mathbf{D}_{S_{r}}(f+g)(x)=(\mathbf{D}_{S_{r}}f)(x)+(\mathbf{D}_{S_{r}}g)(x)$.
    \item $\mathbf{D}_{S_{r}}(cf)(x)=c(\mathbf{D}_{S_{r}}f)(x)$.
\end{enumerate}
\end{proposition}
In the following result, we show that the derivative $\mathbf{D}_{S_{d}}$ satisfies the identity $\mathbf{D}_{\psi}x^n=\psi_{n}x^{n-1}$.
\begin{proposition}
For all $n\geq d$
\begin{equation}
    \mathbf{D}_{S_{d}}x^n=\numbersp{n}{d}x^{n-1}.
\end{equation}
\end{proposition}
\begin{proof}
By using Eqs.(\ref{eqn_vand}) and (\ref{eqn_der_simp}) we reach the expected result. Thus we have
\begin{align*}
    \mathbf{D}_{S_{d}}x^{n}&=\sum_{k=0}^{d-1}\binom{d-1}{k}\frac{x^{k}}{(k+1)!}\frac{d^{k+1}x^n}{dx^{k+1}}\\
    &=\sum_{k=0}^{d-1}\binom{d-1}{k}\frac{x^{k}}{(k+1)!}n(n-1)(n-2)\cdots(n-k)x^{n-k-1}\\
    &=\sum_{k=0}^{d-1}\binom{d-1}{k}\binom{n}{k+1}x^{n-1}\\
    &=\numbersp{n}{d}x^{n-1}.
\end{align*}
\end{proof}
Computing the derivatives $\mathbf{D}_{T}$ and $\mathbf{D}_{Te}$ to the product of functions $fg$ we obtain
\begin{align*}
    \mathbf{D}_{T}(fg)&=(\mathbf{D}_{T}f)g+f(\mathbf{D}_{T}g)+xf^{\prime}g^{\prime},\\
    \mathbf{D}_{Te}(fg)&=(\mathbf{D}_{Te}f)g+f(\mathbf{D}_{Te}g)+\frac{x^2}{2}(f^\prime g^\prime)^\prime+2xf^{\prime}g^{\prime}.
\end{align*}
Then an $S_{d}$-analog for the ordinary derivative of the product of functions cannot be obtained from the above results.

\subsection{Calculus on Bell numbers}
A partition of a set $S$ is defined as family of nonempty, pairwise disjoint subsets of $S$ whose union is $S$. The $n$-th Bell number, denoted by $B_{n}$, is the number of all possible partitions of the $n$-element set $S$. For example, $B_{3}=5$ because the 3-element set $\{1,2,3\}$ can be partitioned in 5 distinct ways:
\begin{align*}
    &\{\{1\},\{2\},\{3\}\},\\
    &\{\{1\},\{2,3\}\},\\
    &\{\{2\},\{1,3\}\},\\
    &\{\{3\},\{1,2\}\},\\
    &\{1,2,3\}.
\end{align*}
The first Bell numbers are:
\begin{equation}
    \{B_{n}\}=\{1,1,2,5,15,52,203,877,...\}.
\end{equation}
Dobinski's formula \cite{dobinski} states that the $n$-th Bell number $B_{n}$ equals 
\begin{equation}
    B_{n}=\frac{1}{e}\sum_{k=0}^{\infty}\frac{k^n}{k!}
\end{equation}
where $e$ denotes the Euler number. We will use Dobinski's formula to define the derivative of the calculus on Bell numbers.

To satisfy Ward's conditions for a calculus on sequences, define $\mathrm{B}_{0}=0$ and $\mathrm{B}_{n}=B_{n-1}$, for $n\geq1$. Next, we will give the Bell-analog of the factorials and binomial coefficients.
\begin{definition}
The Belltorial is defined by
\begin{equation}\label{eqn_fibotorial}
    \mathrm{B}_{n}!=\prod_{k=1}^{n}\mathrm{B}_{k},\ n\geq1,\ \brk[c]{0}_{s,t}!=1.
\end{equation}
Let us introduce also the Bellnomial coefficients
\begin{equation}\label{eqn_fibonomial}
    \binom{n}{k}_{B}=\frac{\mathrm{B}_{n}!}{\mathrm{B}_{k}!\mathrm{B}_{n-k}!}
\end{equation}
\end{definition}
Next we have the Bell-analog of the derivative.
\begin{definition}
For any function $f(x)$, we define the Bell or $B$-derivative of $f(x)$ as
\begin{equation}\label{eqn_der_bell}
    (\mathbf{D}_{B}f)(x)=
    \begin{cases}
    \frac{1}{e}\sum_{k=0}^{\infty}\frac{(\mathbf{D}_{k,0}f)(x)}{k!},&\text{ if }x\neq0;\\
    f^{\prime}(0),&\text{ if }x=0,
    \end{cases}
\end{equation}
where
\begin{equation}
    (\mathbf{D}_{k,0}f)(x)=\frac{f(kx)-f(0)}{kx}
\end{equation}
and $(\mathbf{D}_{0,0}f)(x)=\lim_{k\rightarrow0}(\mathbf{D}_{k,0}f)(x)=f^{\prime}(0)$, provided that $f^{\prime}(0)$ exists.
\end{definition}
Next it is shown that the $B$-derivative is a linear operator.
\begin{proposition}
For any pair of functions $f(x),g(x)$ and for all $c\in\R$ it is satisfied that
\begin{enumerate}
    \item $\mathbf{D}_{B}(cf)(x)=c(\mathbf{D}_{B}f)(x)$.
    \item $\mathbf{D}_{B}(f+g)(x)=(\mathbf{D}_{B}f)(x)+(\mathbf{D}_{B}g)(x)$
\end{enumerate}
\end{proposition}
\begin{proof}
For $x\neq0$
\begin{align*}
    \mathbf{D}_{B}(cf)(x)&=\frac{1}{e}\sum_{k=0}^{\infty}\frac{\mathbf{D}_{k,0}(cf)(x)}{k!}=\frac{c}{e}\sum_{k=0}^{\infty}\frac{(\mathbf{D}_{k,0}f)(x)}{k!}
\end{align*}
and
\begin{align*}
    \mathbf{D}_{B}(f+g)(x)&=\frac{1}{e}\sum_{k=0}^{\infty}\frac{\mathbf{D}_{k,0}(f+g)(x)}{k!}=\frac{1}{e}\sum_{k=0}^{\infty}\frac{(\mathbf{D}_{k,0}f)(x)}{k!}+\frac{1}{e}\sum_{k=0}^{\infty}\frac{(\mathbf{D}_{k,0}g)(x)}{k!}.
\end{align*}
If $x=0$, then the statements follow from the linearity properties of the ordinary derivative.
\end{proof}
Finally we show that $\mathbf{D}_{B}x^n=\mathrm{B}_{n}x^{n-1}$, for all $n\geq0$.
\begin{proposition}
For all $c\in\R$ and for all $n\geq1$
\begin{enumerate}
    \item $\mathbf{D}_{B}c=0$.
    \item $\mathbf{D}_{B}x^n=\mathrm{B}_{n}x^{n-1}$.
\end{enumerate}
\end{proposition}
\begin{proof}
From Eq.(\ref{eqn_der_bell}) follows that
\begin{align*}
    \mathbf{D}_{B}c=\frac{1}{e}\sum_{k=0}^{\infty}\frac{(\mathbf{D}_{k,0}c)(x)}{k!}=0
\end{align*}
and
\begin{align*}
    \mathbf{D}_{B}x^n&=\frac{1}{e}\sum_{k=0}^{\infty}\frac{\mathbf{D}_{k,0}x^n}{k!}\\
    &=\frac{1}{e}\sum_{k=0}^{\infty}\frac{\brk[c]{n}_{k,0}x^{n-1}}{k!}\\
    &=\frac{x^{n-1}}{e}\sum_{k=0}^{\infty}\frac{k^{n-1}}{k!}=B_{n-1}x^{n-1}=\mathrm{B}_{n}x^{n-1}.
\end{align*}
\end{proof}
We use Eq.(\ref{eqn_der_bell}) to obtain the $B$-derivative of the product $fg$ of functions. We have
\begin{align*}
    \mathbf{D}_{B}(fg)(x)&=\frac{1}{e}\sum_{k=0}^{\infty}\frac{\mathbf{D}_{k,0}(fg)(x)}{k!}\\
    &=\frac{1}{e}\sum_{k=0}^{\infty}\frac{f(kx)(\mathbf{D}_{k,0}g)(x)+g(0)(\mathbf{D}_{k,0}f)(x)}{k!}\\
    &=\frac{1}{e}\sum_{k=0}^{\infty}f(kx)\frac{(\mathbf{D}_{k,0}g)(x)}{k!}+g(0)\frac{1}{e}\sum_{k=0}^{\infty}\frac{(\mathbf{D}_{k,0}f)(x)}{k!}\\
    &=\frac{1}{e}\sum_{k=0}^{\infty}f(kx)\frac{(\mathbf{D}_{k,0}g)(x)}{k!}+g(0)(\mathbf{D}_{B}f)(x)
\end{align*}
and
\begin{equation*}
    \mathbf{D}_{B}(fg)(x)=\frac{1}{e}\sum_{k=0}^{\infty}g(kx)\frac{(\mathbf{D}_{k,0}f)(x)}{k!}+f(0)(\mathbf{D}_{B}g)(x).
\end{equation*}
From here it is not possible to obtain the $B$-analogue of the product rule for the ordinary derivative.

\section{Ward-Fonten\'e universal algebra}

Now denote $\ward_{\psi,\C}[[x]]$ the set of formal power series of the form $\sum_{n=0}^{\infty}a_{n}\frac{x^{n}}{\psi_{n}!}$ with coefficients in $\C$. It is clear that $(\ward_{\psi,\C}[[x]],+,\cdot)$ is a ring with sum and product of ordinary series, that is,
$$f(x)+g(x)=\sum_{n=0}^{\infty}(a_{n}+b_{n})\frac{x^{n}}{\psi_{n}!},$$
and
$$f(x)\cdot g(x)=\sum_{n=0}^{\infty}\sum_{k=0}^{n}\binom{n}{k}_{\psi}a_{k}b_{n-k}\frac{x^{n}}{\psi_{n}!},$$ 

where $f(x)=\sum_{n=0}^{\infty}a_{n}\frac{x^{n}}{\psi_{n}!}$, 
$g(x)=\sum_{n=0}^{\infty}b_{n}\frac{x^{n}}{\psi_{n}!}\in\ward_{\psi,\C}[[x]]$. The ring $\ward_{\psi,\C}[[x]]$ will be called $\psi$-ring of Ward of $\psi$-exponential generating functions. For the case $\psi_{n}=n$ the Ward ring reduces to a Hurwitz ring (see \cite{keigher}). When $\psi_{n}=[n]$, then we will call $\ward_{q,\C}[[x]]$ the $q$-ring of Ward. The series $\sum_{n=0}^{\infty}a_{n}\frac{z^{n}}{[n]!}$ are known as Eulerian generating functions. When $\psi_{n}=F_{n}$, we will call $\ward_{F,\C}[[x]]$ the $F$-ring of Ward or Fibonomial ring of Ward. 

When $\psi_{n}=\numberspil{n}{d}$, for $d$ non-negative, define the $S_{d}$-exponential generating functions as
\begin{equation*}
    \sum_{n=0}^{\infty}a_{n}\frac{x^n}{\numberspil{n}{d}!}
\end{equation*}
and we will call to $\ward_{S_{d},\C}[[x]]$ the $S_{d}$-ring of Ward. Likewise, when $\psi_{n}=\mathrm{B}_{n}$ define the $B$-exponential generating functions as
\begin{equation*}
    \sum_{n=0}^{\infty}a_{n}\frac{x^n}{\mathrm{B}_{n}!}
\end{equation*}
and we will call to $\ward_{B,\C}[[x]]$ the $B$-ring of Ward.

Next we define the Fonten\'e product of $\psi$-exponential generating functions.
\begin{definition}
For all $i,j\in\N$ with $j<i$ and for all $f,g\in\ward_{\psi,\C}[[x]]$ we define the product $\ast_{i,j}:\ward_{\psi,\C}[[x]]\times\ward_{\psi,\C}[[x]]\rightarrow\ward_{\psi,\C}[[x]]$ by
\begin{equation}
    f\ast_{i,j}g=\sum_{n=0}^{\infty}\sum_{k=0}^{n}F(n+i,k+j)\binom{n}{k}_{\psi}a_{k}b_{n-k}\frac{x^n}{\psi_{n}!}.
\end{equation}
We also define the product $\ast_{\infty,\infty}:\ward_{\psi,\C}[[x]]\times\ward_{\psi,\C}[[x]]\rightarrow\ward_{\psi,\C}[[x]]$ by 
\begin{equation}
f\ast_{\infty,\infty}g=fg,    
\end{equation}
i.e., $\ast_{\infty,\infty}$ is the ordinary product in $\ward_{\psi,\C}[[x]]$. We will call the products $\ast_{i,j}$ the Fonten\'e products.
\end{definition}
When $\psi_{n}=n$, then the Fonten\'e products reduces to the ordinary product of series in $\ward_{\C}[[x]]$. 
Now let $\ward_{\C}$ denote the ring of sequences with elements in the field $\C$ and let $\rho_{x}$ the map $\rho_{x}:\ward_{\C}\rightarrow\ward_{\psi,\C}[[x]]$ defined by 
\begin{equation*}
\rho_{x}(\{a_{n}\}_{n\in\N})=\sum_{n=0}^{\infty}a_{n}\frac{x^{n}}{\psi_{n}!}.    
\end{equation*}
In other words, $\rho_{x}$ maps the ring of sequences $\ward_{\C}$ to the ring of $\psi$-exponential generating functions $\ward_{\psi,\C}[[x]]$. Let $F_{i,j}$ and $G_{i,j}$ the sequences
\begin{equation*}
    F_{i,j}=\{F(n+i,n+j)\}_{n=0}^{\infty}
\end{equation*}
and
\begin{equation*}
    G_{i,j}=\{F(n+i,j)\}_{n=0}^{\infty}
\end{equation*}
and define the maps $M_{i,j}:\ward_{\psi,\C}[[x]]\rightarrow\ward_{\psi,\C}[[x]]$ by $M_{i,j}(\rho_{x}(\mathbf{a}))=\rho_{x}(F_{i,j}\cdot\mathbf{a})$
and $L_{i,j}:\ward_{\psi,\C}[[x]]\rightarrow\ward_{\psi,\C}[[x]]$ by $L_{i,j}(\rho_{x}(\mathbf{a}))=\rho_{x}(G_{i,j}\cdot\mathbf{a})$ for all $\mathbf{a}\in\ward_{\C}$, where $\cdot$ is the component-wise product of sequences in $\ward_{\C}$. When $j=0$, $G_{i,0}=\{1,1,1,...\}$. That is, the map $L_{i,0}$ is the identity map. With this in mind, we show that the set $\ward_{\psi,\C}[[x]]$ with the Fonten\'e product is a ring.

\begin{theorem}\label{theo_anillo_i_j}
If $\psi_{n}=n$, then $(\ward_{\psi,\C}[[x]],+,\ast_{i,j})$ is a commutative ring with unit. If $\psi_{n}\neq n$, then the set $(\ward_{\psi,\C}[[x]],+,\ast_{i,j})$ is a non-associative and non-commutative ring. For all $f(x)$ in $\ward_{\psi,\C}[[x]]$ it holds that $f(x)\ast_{i,j}e=eM_{i,j}(f(x))$ and $e\ast_{i,j}f(x)=eL_{i,j}(f(x))$ with $e=\frac{1}{F(i,j)}$. When $\psi_{n}=[n]$, $e$ is an unit on left-hand side in $\ward_{q,\C}[[x]]$. In general, if $j=0$, then $1$ is an unit on the left-hand side in $\ward_{\psi,\C}[[x]]$. We will call $(\ward_{\psi,\C}[[x]],+,\ast_{i,j})$ the $\psi$-ring of Ward-Fonten\'e with product $\ast_{i,j}$.
\end{theorem}
\begin{proof}
Clearly $(\ward_{\psi,\C}[[x]],+)$ is an abelian group. If $\psi_{n}=n$, then $(\ward_{\psi,\C}[[x]],+,\ast_{i,j})$ is a commutative ring with unit. Now suppose that $\psi_{n}=[n]$. Then $f(x)\ast_{i,j}g(x)=q^{j}f(qx)g(x)$. On the one hand
\begin{eqnarray*}
(f(x)\ast_{i,j}g(x))\ast_{i,j}h(x)&=&(q^{j}f(qx)g(x))\ast_{i,j}h(x)\\
&=&q^{2j}f(q^{2}x)g(qx)h(x)
\end{eqnarray*}
and on the other hand
\begin{eqnarray*}
f(x)\ast_{i,j}(g(x)\ast_{i,j}h(x))&=&f(x)\ast_{i,j}(q^{j}g(qx)h(x))\\
&=&q^{2j}f(qx)g(qx)h(x).
\end{eqnarray*}
Therefore the product $\ast_{i,j}$ is not associative. It is also easy to show that $\ward_{q,\C}[[x]]$ is not commutative. Now, for all $f(x)\in\ward_{q,\C}[[x]]$ it holds that
\begin{equation}\label{eqn_f_e}
    f(x)\ast_{i,j}e=e\sum_{n=0}^{\infty}F(n+i,n+j)a_{n}\frac{x^n}{\psi_{n}!}=eM_{i,j}(f(x))=f(qx)
\end{equation}
and
\begin{equation}\label{eqn_e_f}
    e\ast_{i,j}f(x)=e\sum_{n=0}^{\infty}F(n+i,j)a_{n}\frac{x^n}{\psi_{n}!}=eL_{i,j}(f(x))=f(x)
\end{equation}
Then $e$ is an unit on the left. Moreover, it is satisfied that $f\ast_{i,j}(g+h)=f\ast_{i,j}g+f\ast_{i,j}h$ and $(f+g)\ast_{i,j}h=f\ast_{i,j}h+g\ast_{i,j}h$. Thus $(\ward_{q,\C}[[x]],+,\ast_{i,j})$ is a ring non-commutative, non-associative with unit on left. We will now do the proof for any sequence other than $[n]$. We will show that the product $\ast_{i,j}$ is non-associative. On the one hand,
\begin{multline*}
    (f\ast_{i,j}g)\ast_{i,j}h=\left(\sum_{n=0}^{\infty}\sum_{k=0}^{n}F(n+i,k+j)\binom{n}{k}_{\psi}a_{k}b_{n-k}\frac{x^n}{\psi_{n}!}\right)\ast_{i,j}\sum_{n=0}^{\infty}c_{n}\frac{x^n}{\psi_{n}!}\\
    =\sum_{n=0}^{\infty}\sum_{l=0}^{n}\sum_{k=0}^{l}F(n+i,l+j)F(n+i,k+j)\binom{n}{l}_{\psi}\binom{l}{k}_{\psi}a_{k}b_{l-k}c_{n-l}\frac{x^n}{\psi_{n}!}
\end{multline*}
and on the other hand
\begin{multline*}
    f\ast_{i,j}(g\ast_{i,j}h)=\sum_{n=0}^{\infty}a_{n}\frac{x^n}{\psi_{n}!}\ast_{i,j}\left(\sum_{n=0}^{\infty}\sum_{k=0}^{n}F(n+i,k+j)\binom{n}{k}_{\psi}b_{k}c_{n-k}\frac{x^n}{\psi_{n}!}\right)\\
    =\sum_{n=0}^{\infty}\sum_{l=0}^{n}\sum_{k=0}^{n-l}F(n+i,l+j)F(n-l+i,k+j)\binom{n}{k}_{\psi}\binom{n-l}{k}_{\psi}a_{l}b_{k}c_{n-k-l}\frac{x^n}{\psi_{n}!}.
\end{multline*}
Now it suffices to note that for $n\geq1$ the coefficients of $(f\ast_{i,j}g)\ast_{i,j}h$ and $f\ast_{i,j}(g\ast_{i,j}g)$ do not match. 
In the same way it is proved that $\ward_{\psi,\C}[[x]]$ is not commutative. Now, making $j=0$ in Eq.(\ref{eqn_f_e}) and Eq.(\ref{eqn_e_f}) we obtain
\begin{equation*}
    1\ast_{i,0}f(x)=\sum_{n=0}^{\infty}F(n+i,0)a_{n}\frac{x^n}{\psi_{n}!}=f(x)
\end{equation*} 
and
\begin{equation*}
    f(x)\ast_{i,0}1=\sum_{n=0}^{\infty}F(n+i,n)a_{n}\frac{x^n}{\psi_{n}!}=M_{i,0}(f(x))
\end{equation*}
and thus $1$ is a unit on the left. If $j\neq0$, then there is no unit with respect to the product $\ast_{i,j}$. Finally, a proof for the distributive property is similar to the one done for the $\psi=[n]$ case. 
\end{proof}

\begin{proposition}
For all $f,g$ in $\ward_{\psi,\C}[[x]]$ and for all $\alpha\in\C$ we have
\begin{enumerate}
    \item $(\alpha f)\ast_{i,j}g=f\ast_{i,j}(\alpha g)=\alpha(f\ast_{i,j}g)$.
    \item If $\psi_{n}=[n]$, then $(\alpha\ast_{i,j}f)\ast_{i,j}g=\alpha\ast_{i,j}(f\ast_{i,j}g)$.
\end{enumerate}
\end{proposition}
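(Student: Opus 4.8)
The plan is to prove part (1) by a one‑line coefficient computation and part (2) by reducing everything to the explicit formula for $\ast_{i,j}$ in the $q$-case that was obtained in the proof of Theorem \ref{theo_anillo_i_j}.

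For part (1), write $f(x)=\sum_{n\geq0}a_{n}\frac{x^{n}}{\psi_{n}!}$ and $g(x)=\sum_{n\geq0}b_{n}\frac{x^{n}}{\psi_{n}!}$, so that $\alpha f$ has coefficient sequence $\{\alpha a_{n}\}_{n\in\N}$ and $\alpha g$ has coefficient sequence $\{\alpha b_{n}\}_{n\in\N}$. Substituting these into the definition of $\ast_{i,j}$ and pulling the constant $\alpha\in\C$ out of the double sum,
\[
(\alpha f)\ast_{i,j}g=\sum_{n=0}^{\infty}\sum_{k=0}^{n}F(n+i,k+j)\binom{n}{k}_{\psi}(\alpha a_{k})b_{n-k}\frac{x^{n}}{\psi_{n}!}=\alpha\,(f\ast_{i,j}g),
\]
and identically $f\ast_{i,j}(\alpha g)=\alpha\,(f\ast_{i,j}g)$, since the only factor touched is $a_{k}$ (resp.\ $b_{n-k}$), which carries the scalar. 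Equivalently, this is just the $\C$-bilinearity of $(f,g)\mapsto f\ast_{i,j}g$ in each coefficient slot, additivity already having been recorded in Theorem \ref{theo_anillo_i_j}.

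For part (2), recall that when $\psi_{n}=[n]$ we have $F(n,k)=q^{k}$, hence $F(n+i,k+j)=q^{k+j}$, and the proof of Theorem \ref{theo_anillo_i_j} gives $f\ast_{i,j}g=q^{j}f(qx)g(x)$. A constant series $\alpha$ is fixed by the dilation $x\mapsto qx$, so for \emph{any} $h\in\ward_{q,\C}[[x]]$ one gets $\alpha\ast_{i,j}h=q^{j}\alpha\,h(x)$; that is, the operator $\alpha\ast_{i,j}(-)$ coincides with scalar multiplication by $q^{j}\alpha$. Using this together with part (1),
\[
(\alpha\ast_{i,j}f)\ast_{i,j}g=(q^{j}\alpha f)\ast_{i,j}g=q^{j}\alpha\,(f\ast_{i,j}g)=q^{2j}\alpha\,f(qx)g(x),
\]
while
\[
\alpha\ast_{i,j}(f\ast_{i,j}g)=\alpha\ast_{i,j}\bigl(q^{j}f(qx)g(x)\bigr)=q^{j}\alpha\cdot q^{j}f(qx)g(x)=q^{2j}\alpha\,f(qx)g(x),
\]
so the two sides agree.

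The computations are routine; the one conceptual point worth flagging is why part (2) survives the failure of associativity proved in Theorem \ref{theo_anillo_i_j}. There the obstruction to $(f\ast_{i,j}g)\ast_{i,j}h=f\ast_{i,j}(g\ast_{i,j}h)$ came from the extra dilation that each $\ast_{i,j}$ applies to its left argument; when the left‑most argument is the constant $\alpha$, that dilation is invisible ($\alpha(qx)=\alpha$), which is exactly what makes this restricted associativity hold. So the main ``obstacle'' is really just bookkeeping: keeping the two meanings of $\alpha f$ (scalar multiple) and $\alpha\ast_{i,j}f$ (Fontan\'e product with the constant series $\alpha$) distinct, and observing that they differ only by the harmless factor $q^{j}$.
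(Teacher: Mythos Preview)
Your proof is correct and essentially matches the paper's. For part (1) the paper simply declares it trivial, which is exactly your one-line coefficient computation; for part (2) the paper quotes the triple-product formulas $(f\ast_{i,j}g)\ast_{i,j}h=q^{2j}f(q^{2}x)g(qx)h(x)$ and $f\ast_{i,j}(g\ast_{i,j}h)=q^{2j}f(qx)g(qx)h(x)$ from Theorem~\ref{theo_anillo_i_j} and sets $f=\alpha$, which is the same insight you use (constants are fixed by $x\mapsto qx$), only you recompute those products step by step instead of citing them.
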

\begin{proof}
The proof of 1 is trivial and this result tells us that the $\psi$-ring $\ward_{\psi,\C}[[x]]$ is a $\C$-algebra. To prove 2 we will keep in mind that $(f\ast_{i,j}g)\ast_{i,j}h=q^{2j}f(q^{2}x)g(qx)h(x)$ and $f\ast_{i,j}(g\ast_{i,j}h)=q^{2j}f(qx)g(qx)h(x)$. Then by making $f(x)=\alpha$ we obtain the desired result.
\end{proof}

\begin{remark}
Result 2 in the above proposition does not hold in general for all $\psi$-ring $\ward_{F,\C}[[x]]$. The latter can be checked for the $F$-ring $\ward_{F,\C}[[x]]$. On the other hand, result 1 allows us to obtain a field $\C_{\psi}$ isomorphic to $\C$, where $\alpha\ast_{i,j}\beta=F(i,j)\alpha\beta$ with unit $e=\frac{1}{F(i,j)}$. \end{remark}

If we concatenate Fonten\'e products, we will obtain new products as shown below. 
\begin{definition}
For all $i_{1},i_{2},j_{1},j_{2}$ with $j_{1}<i_{1}$, $j_{2}<i_{2}$ and for all $f,g,\in\ward_{\psi,\C}[[x]]$ we define the product $\ast_{i_{1},j_{1}}\ast_{i_{2},j_{2}}:\ward_{\psi,\C}[[x]]\times\ward_{\psi,\C}[[x]]\rightarrow\ward_{\psi,\C}[[x]]$ by
\begin{equation}
    f\ast_{i_{1},j_{1}}\ast_{i_{2},j_{2}}g=\sum_{n=0}^{\infty}\left(\sum_{k=0}^{n}F(n+i_{1},k+j_{1})F(n+i_{2},k+j_{2})\binom{n}{k}_{\psi}a_{k}b_{n-k}\right)\frac{x^n}{\psi_{n}!}.
\end{equation}
In general 
\begin{equation}
    f\left(\prod_{h=1}^{l}\ast_{i_{h},j_{h}}\right)g=\sum_{n=0}^{\infty}\left(\sum_{k=0}^{n}\prod_{h=1}^{l} F(n+i_{h},k+j_{h})\binom{n}{k}_{\psi}a_{k}b_{n-k}\right)\frac{x^n}{\psi_{n}!}
\end{equation}
with $i_{h},j_{h}\in\N$, $j_{h}<i_{h}$ and $\prod_{h=1}^{l}\ast_{i_{h},j_{h}}=\ast_{i_{1},j_{1}}\cdots\ast_{i_{l},j_{l}}$.
\end{definition}

Set $\textbf{i}=(i_{1},...i_{l})$ and $\textbf{j}=(j_{1},...,j_{l})$. Now define the maps $M_{\textbf{i},\textbf{j}}:\ward_{\psi,\C}[[x]]\rightarrow\ward_{\psi,C}[[x]]$ by $M_{\textbf{i},\textbf{j}}=M_{i_{1},j_{1}}\cdots M_{i_{l},j_{l}}$ and $L_{\textbf{i},\textbf{j}}:\ward_{\psi,\C}[[x]]\rightarrow\ward_{\psi,C}[[x]]$ by $L_{\textbf{i},\textbf{j}}=L_{i_{1},j_{1}}\cdots L_{i_{l},j_{l}}$, where the product is by composition of maps. The proof of the following theorem is similar to the proof of the theorem \ref{theo_anillo_i_j} and will therefore be omitted.

\begin{theorem}\label{theo_anillo_prod_i_j}
If $\psi_{n}=n$, then $(\ward_{\psi,\C}[[x]],+,\prod_{h=1}^{l}\ast_{i_{h},j_{h}})$ is a commutative ring with unit. If $\psi_{n}\neq n$, then the set $(\ward_{\psi,\C}[[x]],+,\prod_{h=1}^{l}\ast_{i_{h},j_{h}})$ is a non-associative and non-commutative ring. For all $f(x)$ en $\ward_{\psi,\C}[[x]]$ it holds that 
\begin{equation*}
f(x)\left(\prod_{h=1}^{l}\ast_{i_{h},j_{h}}\right)e=eM_{\textbf{i},\textbf{j}}(f(x))    
\end{equation*}
and 
\begin{equation*}
e\left(\prod_{h=1}^{l}\ast_{i_{h},j_{h}}\right)f(x)=eL_{\textbf{i},\textbf{j}}(f(x))    
\end{equation*}
with $e=\frac{1}{\prod_{h=1}^{l}F(i_{h},j_{h})}$. When $\psi_{n}=[n]$, $e$ is an unit on left-hand side in $\ward_{q,\C}[[x]]$. In general, if $j_{h}=0$ for $1\leq h\leq l$, then $1$ is an unit on the left-hand side in $\ward_{\psi,\C}[[x]]$. We will call $(\ward_{\psi,\C}[[x]],+,\prod_{h=1}^{l}\ast_{i_{h},j_{h}})$ the $\psi$-ring of Ward-Fonten\'e with product $\prod_{h=1}^{l}\ast_{i_{h},j_{h}}$.
\end{theorem}

We conclude this section by constructing a $\C$-algebra of products defined on the set $\ward_{\psi,\C}[[x]]$. 

\begin{theorem}\label{theo_alg_prod}
Let $\C_{\psi}^{\ast}$ denote the set of all Fonten\'e products $\ast_{i,j}$, $j<i$, defined over the $\psi$-ring of Ward $\ward_{\psi,\C}[[x]]$ and define the following operations in $\C_{\psi}^\ast$: the formal sum $\boxplus$ as
\begin{equation}\label{eqn_sum}
f(\ast_{i_{1},j_{1}}\boxplus\ast_{i_{2},j_{2}})g=f\ast_{i_{1},j_{1}}g+f\ast_{i_{2},j_{2}}g,
\end{equation}
product $\cdot$ given by concatenation, i.e.
\begin{equation}\label{eqn_concat}
\ast_{i_{1},j_{1}}\cdot\ast_{i_{2},j_{2}}=\ast_{i_{1},j_{1}}\ast_{i_{2},j_{j_{2}}}    
\end{equation}
and scalar product
\begin{equation}\label{eqn_escalar}
f(\alpha\ast_{i,j})g=(\alpha f)\ast_{i,j}g=f\ast_{i,j}(\alpha g)=\alpha(f\ast_{i,j}g)
\end{equation}
for all $f,g\in\ward_{\psi,\C}[[x]]$, all $\alpha\in\C$ and all $\ast_{i_{1},j_{1}},\ast_{i_{2},j_{2}}\in\C_{\psi}^\ast$. Then $(\C_{\psi}^{\ast},\boxplus,\cdot)$ is a commutative unital algebra over $\C$.
\end{theorem}
\begin{proof}
From Eq.(\ref{eqn_escalar}) it follows that $f(0\ast_{i,j})g=0$. Now denote $\ast_{0}=0\ast_{i,j}$ to then show that $\ast_{0}$ is the neutral element in $\C_{\psi}^\ast$. This follows because
\begin{eqnarray*}
    f(\ast_{i,j}\boxplus\ast_{0})g&=&f\ast_{i,j}g+f\ast_{0}g\\
    &=&f\ast_{i,j}g+f(0\ast_{k,l})g\\
    &=&f\ast_{i,j}g+0\\
    &=&f\ast_{i,j}g
\end{eqnarray*}
and thus $\ast_{i,j}\boxplus\ast_{0}=\ast_{0}\boxplus\ast_{i,j}=\ast_{i,j}$. We define $\boxminus$ as $\ast_{i_{1},j_{1}}\boxminus\ast_{i_{2},j_{2}}=\ast_{i_{1},j_{1}}\boxplus(-\ast_{i_{2},j_{2}})$. Then
\begin{eqnarray*}
    f(\ast_{i,j}\boxminus\ast_{i,j})g&=&f\ast_{i,j}g-f\ast_{i,j}g=0=f\ast_{0}g
\end{eqnarray*}
for all $f,g\in\ward_{\psi,\C}[[x]]$. Then $\ast_{i,j}\boxminus\ast_{i,j}=\ast_{0}$ so $\ast_{i,j}$ and $-\ast_{i,j}$ are additive inverses in $\C_{\psi}^\ast$. It is easily shown that the sum $\boxplus$ is commutative and associative. Then the set $(\C_{\psi}^\ast,\boxplus)$ is an abelian group. Pick $\alpha,\beta$ in $\C$. Then
\begin{equation*}
    f(\alpha(\beta\ast_{i,j})g)=(\alpha f)(\beta\ast_{i,j}g)=((\alpha\beta)f)\ast_{i,j}g=f((\alpha\beta)\ast_{i,j})g
\end{equation*}
thus $\alpha(\beta\ast_{i,j})=(\alpha\beta)\ast_{i,j}$. As
\begin{align*}
    f((\alpha+\beta)\ast_{i,j})g&=((\alpha+\beta)f)\ast_{i,j}g\\
    &=(\alpha f+\beta g)\ast_{i,j}g\\
    &=(\alpha f)\ast_{i,j}g+(\beta f)\ast_{i,j}g\\
    &=f(\alpha\ast_{i,j})g+f(\beta\ast_{i,j})g\\
    &=f(\alpha\ast_{i,j}\boxplus\beta\ast_{i,j})g
\end{align*}
it follows then that $(\alpha+\beta)\ast_{i,j}=\alpha\ast_{i,j}\boxplus\beta\ast_{i,j}$. Similarly, it is shown that $\alpha(\ast_{i,j}\boxplus\ast_{k,l})=\alpha\ast_{i,j}\boxplus\alpha\ast_{k,l}$, for $\ast_{i,j},\ast_{k,l}\in\C_{\psi}^\ast$, $j<i$, $l<k$. On the other hand, it follows easily from (\ref{eqn_concat}) that
\begin{equation*}
(\ast_{i_{1},j_{1}}\ast_{i_{2},j_{2}})\ast_{i_{3},j_{3}}=\ast_{i_{1},j_{1}}(\ast_{i_{2},j_{2}}\ast_{i_{3},j_{3}})
\end{equation*}
and the concatenation is associative. In addition, it is easy to notice that the product $\ast_{\infty,\infty}$ is the unit in $\C_{\psi}^\ast$, because 
\begin{equation*}
f(\ast_{\infty,\infty}\ast_{i,j})g=f(\ast_{i,j}\ast_{\infty,\infty})g=f\ast_{i,j}g,
\end{equation*}
and from here $\ast_{\infty,\infty}\ast_{i,j}=\ast_{i,j}\ast_{\infty,\infty}=\ast_{i,j}$. The commutativity is evident. Finally we have that
\begin{align*}
\ast_{i,j}(\ast_{k,l}\boxplus\ast_{r,s})&=\ast_{i,j}\ast_{k,l}\boxplus\ast_{i,j}\ast_{r,s}\\    
(\ast_{k,l}\boxplus\ast_{r,s})\ast_{i,j}&=\ast_{k,l}\ast_{i,j}\boxplus\ast_{r,s}\ast_{i,j}\\
(\alpha\ast_{i,j})(\beta\ast_{k,l})&=(\alpha\beta)(\ast_{i,j}\ast_{k,l})
\end{align*}
for all $\ast_{i,j},\ast_{k,l},\ast_{r,s}\in\C_{\psi}^\ast$
and therefore $\C_{\psi}^\ast$ is a commutative unital algebra over $\C$.
\end{proof}

We will note that the structure of this $\C$-algebra depends on the nature of the sequence $\psi$. When $\psi_{n}=n$, then $\C_{\psi}^\ast=\{\ast_{\infty,\infty}\}$, i.e., the algebra $\C_{\psi}^\ast$ reduces to the ordinary product in $\ward_{\C}[[x]]$. When $\psi_{n}=[n]$ we have that $f(x)\ast_{i,j}g(x)=q^{j}f(qx)g(x)$ and it is noted that this product does not depend on $i$. Then
\begin{align*}
    f\ast_{1,0}g&=f\ast_{2,0}g=f\ast_{3,0}g=\cdots=f(qx)g(x)\\
    f\ast_{2,1}g&=f\ast_{3,1}g=f\ast_{4,1}g=\cdots=qf(qx)g(x)\\
    f\ast_{3,2}g&=f\ast_{4,2}g=f\ast_{5,2}g=\cdots=q^{2}f(qx)g(x)\\
                &\vdots   
\end{align*}
for all $f,g\in\ward_{q,\C}[[x]]$ and a fixed $j$. Where $(\ward_{q,\C}[[x]],+,\ast_{r,j})$ is isomorphic to $(\ward_{q,\C}[[x]],+,\ast_{s,j})$. Likewise
\begin{equation*}
    \left(\ward_{q,\C}[[x]],+,\prod_{i=1}^{l}\ast_{a_{r},b_{j}}\right)\simeq\left(\ward_{q,\C}[[x]],+,\prod_{i=1}^{l}\ast_{c_{s},d_{j}}\right)
\end{equation*}
provided that $b_{1}+\cdots+b_{l}=d_{1}+\cdots+d_{l}$.

\begin{definition}
A Fonten\'e polynomial over $\C$ in the product $\ast_{i,j}$ is an expression of the form
\begin{equation}
    a_{0}\ast_{\infty,\infty}\boxplus a_{1}\ast_{i,j}\boxplus\cdots\boxplus a_{n}\ast_{i,j}^n,
\end{equation}
where $\ast_{i,j}^{0}=\ast_{\infty,\infty}$, $\ast_{i,j}^{1}=\ast_{i,j}$ and $\ast_{i,j}^{n}=\ast_{i,j}\cdot\ast_{i,j}^{n-1}$, $n\geq1$.
The set of all polynomials in $\ast_{i,j}$ over $\C$ is denote by $\C[\ast_{i,j}]$.
\end{definition}
Some polynomials in $\C[\ast_{i,j}]$ are:
\begin{align*}
\ast_{i,j}^{2}\boxplus(2\ast_{i,j})\boxplus\ast_{\infty,\infty}&=(\ast_{i,j}\boxplus\ast_{\infty,\infty})(\ast_{i,j}\boxplus\ast_{\infty,\infty}),\\
\ast_{i,j}^{n}\boxminus\ast_{\infty,\infty}&=(\ast_{i,j}\boxminus\ast_{\infty,\infty})\bigboxplus_{k=0}^{n-1}\ast_{i,j}^{k}.
\end{align*}
Fonten\'e polynomials are added pointwise,
\begin{equation*}
    A\boxplus B=C\text{ when }c_{n}=a_{n}+b_{n}\text{ for all }n\geq0,
\end{equation*}
and multiplied by rule,
\begin{equation*}
    A\cdot B=C\text{ when }c_{n}=\sum_{i+j=n}a_{i}b_{j}\text{ for all }n\geq0
\end{equation*}
for all $A,B\in\C[\ast_{i,j}]$. It is then very straightforward to show the following result.
\begin{theorem}
$\C[\ast_{i,j}]$ is a commutative ring with unit $\ast_{\infty,\infty}$.
\end{theorem}

\begin{definition}
A Fonten\'e monomial in the set of products $\{\ast_{i_{1},j_{1}},\ast_{i_{2},j_{2}},...,\ast_{i_{r},j_{r}}\}$, $j_{a}<i_{a}$, is the product $\prod_{a=1}^{r}\ast_{i_{a},j_{a}}^{k_{i_{a},j_{a}}}$ with integer exponents $k_{i_{a},j_{a}}\geq0$.
\end{definition}

\begin{definition}
A Fonten\'e polynomial in $n$ variables over $\C$ in the monomial $\prod_{a=1}^{r}\ast_{i_{a},j_{a}}^{k_{i_{a},j_{a}}}$ is an expression of the form
\begin{equation}
    a_{0}\ast_{\infty,\infty}\boxplus a_{1}\left(\prod_{a=1}^{r}\ast_{i_{a},j_{a}}^{k_{i_{a},j_{a}}}\right)\boxplus\cdots\boxplus a_{n}\left(\prod_{a=1}^{r}\ast_{i_{a},j_{a}}^{k_{i_{a},j_{a}}}\right)^n.
\end{equation}
The set of all Fonten\'e polynomials in $n$-variables over $\C$ is denote by $\C[\{\ast_{i_{a},j_{a}}\}_{a=1}^{r}]$.
\end{definition}

The following result follows from Theorem \ref{theo_anillo_prod_i_j}.
\begin{theorem}
Let $p(\ast_{i_{1},j_{1}},...,\ast_{i_{r},j_{r}})$ denote a Fonten\'e polynomial in $\C[\{\ast_{i_{a},j_{a}}\}_{a=1}^{r}]$. Then $(\ward_{\psi,\C}[[x]],+,p(\ast_{i_{1},j_{1}},...,\ast_{i_{r},j_{r}}))$ is a non-commutative non-associative ring.
\end{theorem}

\begin{definition}
Let $p(\ast_{i_{1},j_{1}},...,\ast_{i_{r},j_{r}})$ denote a Fonten\'e polynomial in $\C[\{\ast_{i_{a},j_{a}}\}_{a=1}^{r}]$. Define the linear maps
\begin{align*}
\rho&:\C[\ast_{i_{1},j_{1}},...,\ast_{i_{r},j_{r}}]\rightarrow\C[\ast_{i_{1}+1,j_{1}+1},...,\ast_{i_{r}+1,j_{r}+1}],\\
\sigma&:\C[\ast_{i_{1},j_{1}},...,\ast_{i_{r},j_{r}}]\rightarrow\C[\ast_{i_{1}+1,j_{1}},...,\ast_{i_{r}+1,j_{r}},\ast_{1,0}]
\end{align*}
by
\begin{align*}
\rho(p(\ast_{i_{1},j_{1}},...,\ast_{i_{r},j_{r}}))&=\rho\left(\bigboxplus_{h=0}^{n}a_{h}\left(\prod_{a=1}^{r}\ast_{i_{a},j_{a}}^{k_{i_{a},j_{a}}}\right)^h\right)=\bigboxplus_{h=0}^{n}a_{h}\left(\prod_{a=1}^{r}\ast_{i_{a}+1,j_{a}+1}^{k_{i_{a},j_{a}}}\right)^h,\\
\sigma(p(\ast_{i_{1},j_{1}},...,\ast_{i_{r},j_{r}}))&=\sigma\left(\bigboxplus_{h=0}^{n}a_{h}\left(\prod_{a=1}^{r}\ast_{i_{a},j_{a}}^{k_{i_{a},j_{a}}}\right)^h\right)=\bigboxplus_{h=0}^{n}a_{h}\left(\prod_{a=1}^{r}\ast_{i_{a}+1,j_{a}}^{k_{i_{a},j_{a}}}\ast_{1,0}\right)^h
\end{align*}
and
$\rho(\ast_{\infty,\infty})=\ast_{\infty,\infty}$ and $\sigma(\ast_{\infty,\infty})=\ast_{1,0}$.
\end{definition}

\begin{proposition}
The map $\rho$ is a homomorphims of rings.
\end{proposition}
\begin{proof}
Follows directly from the definition of $\rho$.
\end{proof}

\begin{definition}
Let $p(\ast_{i_{1},j_{1}},...,\ast_{i_{r},j_{r}})$ denote a Fonten\'e polynomial in $\C[\{\ast_{i_{a},j_{a}}\}_{a=1}^{r}]$. Define the $p(\ast_{i_{1},j_{1}},...,\ast_{i_{r},j_{r}})$-universal algebra of Ward-Fonten\'e as
\begin{equation}
    \ward_{\psi,\C}[[x]]_{p(\ast_{i_{1},j_{1}},...,\ast_{i_{r},j_{r}})}=\left(\ward_{\psi,\C}[[x]],+,\bigcup_{n=0}^{\infty}\bigcup_{X\in\{\rho,\sigma\}^n}X(p(\ast_{i_{1},j_{1}},...,\ast_{i_{r},j_{r}}))\right)
\end{equation}
where $\{\rho,\sigma\}^n$ denote the $n$-fold composition of $\{\rho,\sigma\}$ defined by $\{\rho,\sigma\}^{0}=\{1\}$, the identity map, $\{\rho,\sigma\}^{1}=\{\rho,\sigma\}$ and $\{\rho,\sigma\}^{n}=\rho\{\rho,\sigma\}^{n-1}\cup\sigma\{\rho,\sigma\}^{n-1}$ for $n\geq1$. Also, define the $\ast_{\infty,\infty}$-universal algebra of Ward-Fonten\'e as
\begin{equation}
    \ward_{\psi,\C}[[x]]_{\ast_{\infty,\infty}}=\left(\ward_{\psi,\C}[[x]],+,\bigcup_{n=0}^{\infty}\bigcup_{X\in\{\rho,\sigma\}^n}X(\ast_{\infty,\infty})\right).
\end{equation}
\end{definition}

\section{Ward-Fonten\'e opposite universal algebras}

In this section we will construct the opposite $\psi$-rings of Ward-Fonten\'e, where this time we will use the equation (\ref{eqn_psi_binom_sim}) for the definition of the Fonten\'e products.

\begin{definition}
For all $i,j$ with $j<i$ and for all $f,g\in\ward_{\psi,\C}[[x]]$ we define the product $\star_{i,j}:\ward_{\psi,\C}[[x]]\times\ward_{\psi,\C}[[x]]\rightarrow\ward_{\psi,\C}[[x]]$ as
\begin{equation}
    f\star_{i,j}g=\sum_{n=0}^{\infty}\sum_{k=0}^{n}F(n+i,n-k+j)\binom{n}{k}_{\psi}a_{k}b_{n-k}\frac{x^n}{\psi_{n}!}.
\end{equation}
We will call $\star_{i,j}$ the opposite Fonten\'e product
\end{definition}

\begin{definition}
For all $i_{1},i_{2},j_{1},j_{2}\in\N$ with $j_{1}<i_{1}$, $j_{2}<i_{2}$ and for all $f,g\in\ward_{\psi,\C}[[x]]$ we define the product $\star_{i_{1},j_{1}}\star_{i_{2},j_{2}}:\ward_{\psi,\C}[[x]]\times\ward_{\psi,\C}[[x]]\rightarrow\ward_{\psi,\C}[[x]]$ as
\begin{equation}
    f\star_{i_{1},j_{1}}\star_{i_{2},j_{2}}g=\sum_{n=0}^{\infty}\sum_{k=0}^{n}F(n+i_{1},n-k+j_{1})F(n+i_{2},n-k+j_{2})\binom{n}{k}_{\psi}a_{k}b_{n-k}\frac{x^n}{\psi_{n}!}.
\end{equation}
In general, we define by concatenation the product $\prod_{i=1}^{l}\star_{a_{i},b_{i}}$ as
\begin{equation}
    f\left(\prod_{i=1}^{l}\star_{a_{i},b_{i}}\right)g=\sum_{n=0}^{\infty}\sum_{k=0}^{n}\prod_{i=1}^{l} F(n+a_{i},n-k+b_{i})\binom{n}{k}_{\psi}a_{k}b_{n-k}\frac{x^n}{\psi_{n}!}
\end{equation}
with $a_{i},b_{i}\in\N$, $b_{i}<a_{i}$ and $\prod_{i=1}^{l}\star_{a_{i},b_{i}}=\star_{a_{1},b_{1}}\cdots\star_{a_{l},b_{l}}$.
\end{definition}

The following are the symmetric versions of the theorems \ref{theo_anillo_i_j}, \ref{theo_anillo_prod_i_j} and \ref{theo_alg_prod}

\begin{theorem}
If $\psi_{n}=n$, then $(\ward_{\psi,\C}[[x]],+,\star_{i,j})$ is a commutative ring with unit. If $\psi_{n}\neq n$, then the set $(\ward_{\psi,\C}[[x]],+,\star_{i,j})$ is a non-associative and non-commutative ring. For all $f(x)$ en $\ward_{\psi,\C}[[x]]$ it holds that $f(x)\star_{i,j}e=eL_{i,j}(f(x))$ and $e\star_{i,j}f(x)=eM_{i,j}(f(x))$ with $e=\frac{1}{F(i,j)}$. When $\psi_{n}=[n]$, $e$ is an unit on right-hand side in $\ward_{q,\C}[[x]]$. In general, if $j=0$, then $1$ is an unit on the right-hand side in $\ward_{\psi,\C}[[x]]$. We will call $(\ward_{\psi,\C}[[x]],+,\ast_{i,j})$ the opposite $\psi$-ring of Ward-Fonten\'e with product $\star_{i,j}$.
\end{theorem}

\begin{theorem}\label{theo_ring_opp_prod_i_j}
If $\psi_{n}=n$, then $(\ward_{\psi,\C}[[x]],+,\prod_{h=1}^{l}\star_{i_{h},j_{h}})$ is a commutative ring with unit. If $\psi_{n}\neq n$, then the set $(\ward_{\psi,\C}[[x]],+,\prod_{h=1}^{l}\star_{i_{h},j_{h}})$ is a non-associative and non-commutative ring. For all $f(x)$ en $\ward_{\psi,\C}[[x]]$ it holds that 
\begin{equation*}
f(x)\left(\prod_{h=1}^{l}\star_{i_{h},j_{h}}\right)e=eL_{\textbf{i},\textbf{j}}(f(x))    
\end{equation*}
and 
\begin{equation*}
e\left(\prod_{h=1}^{l}\star_{i_{h},j_{h}}\right)f(x)=eM_{\textbf{i},\textbf{j}}(f(x))    
\end{equation*}
with $e=\frac{1}{\prod_{h=1}^{l}F(i_{h},j_{h})}$. When $\psi_{n}=[n]$, $e$ is an unit on right-hand side in $\ward_{q,\C}[[x]]$. In general, if $j_{h}=0$ for $1\leq h\leq l$, then $1$ is an unit on the right-hand side in $\ward_{\psi,\C}[[x]]$. We will call $(\ward_{\psi,\C}[[x]],+,\prod_{h=1}^{l}\star_{i_{h},j_{h}})$ the opposite $\psi$-ring of Ward-Fonten\'e with product $\prod_{i=1}^{l}\star_{a_{i},b_{i}}$.
\end{theorem}

\begin{theorem}
Let $\C_{\psi}^{\star}$ denote the set of all Fonten\'e products $\star_{i,j}$, $j<i$, defined over the $\psi$-ring of Ward $\ward_{\psi,\C}[[x]]$ and define the following operations in $\C_{\psi}^\star$: the formal sum $\boxplus$ as
\begin{equation}
f(\star_{i_{1},j_{1}}\boxplus\star_{i_{2},j_{2}})g=f\star_{i_{1},j_{1}}g+f\star_{i_{2},j_{2}}g,
\end{equation}
product $\cdot$ given by concatenation, i.e.
\begin{equation}
\star_{i_{1},j_{1}}\cdot\star_{i_{2},j_{2}}=\star_{i_{1},j_{1}}\star_{i_{2},j_{j_{2}}}    
\end{equation}
and scalar product
\begin{equation}
f(\alpha\star_{i,j})g=(\alpha f)\star_{i,j}g=f\star_{i,j}(\alpha g)=\alpha(f\star_{i,j}g)
\end{equation}
for all $f,g\in\ward_{\psi,\C}[[x]]$, all $\alpha\in\C$ and all $\star_{i_{1},j_{1}},\star_{i_{2},j_{2}}\in\C_{\psi}^\star$. Then $(\C_{\psi}^{\star},\boxplus,\cdot)$ is a commutative unital algebra over $\C$.
\end{theorem}

It is clear that $f\prod_{i=1}^{l}\ast_{a_{i},b_{i}}g=g\prod_{i=1}^{l}\star_{a_{i},b_{i}}f$ for all $f,g\in\ward_{\psi,\C}[[x]]$. Then the rings $(\ward_{\psi,\C}[[x]],+,\prod_{i=1}^{l}\star_{a_{i},b_{i}})$ and $(\ward_{\psi,\C}[[x]],+,\prod_{i=1}^{l}\ast_{a_{i},b_{i}})$
are opposite rings and therefore isomorphic.

\begin{definition}
A Fonten\'e polynomial over $\C$ in the product $\star_{i,j}$ is an expression of the form
\begin{equation}
    a_{0}\star_{\infty,\infty}\boxplus a_{1}\star_{i,j}\boxplus\cdots\boxplus a_{n}\star_{i,j}^n,
\end{equation}
where $\star_{i,j}^{0}=\star_{\infty,\infty}$, $\star_{i,j}^{1}=\star_{i,j}$ and $\star_{i,j}^{n}=\star_{i,j}\cdot\star_{i,j}^{n-1}$, $n\geq1$.
The set of all polynomials in $\star_{i,j}$ over $\C$ is denote by $\C[\star_{i,j}]$.
\end{definition}

Fonten\'e polynomials are added pointwise,
\begin{equation*}
    A\boxplus B=C\text{ when }c_{n}=a_{n}+b_{n}\text{ for all }n\geq0,
\end{equation*}
and multiplied by rule,
\begin{equation*}
    A\cdot B=C\text{ when }c_{n}=\sum_{i+j=n}a_{i}b_{j}\text{ for all }n\geq0
\end{equation*}
for all $A,B\in\C[\star_{i,j}]$. It is then very easy to show the following result.
\begin{theorem}
$\C[\star_{i,j}]$ is a commutative ring with unit $\star_{\infty,\infty}$.
\end{theorem}

\begin{definition}
A Fonten\'e monomial in the set of products $\{\star_{i_{1},j_{1}},\star_{i_{2},j_{2}},...,\star_{i_{r},j_{r}}\}$, $j_{a}<i_{a}$, is the product $\prod_{a=1}^{r}\star_{i_{a},j_{a}}^{k_{i_{a},j_{a}}}$ with integer exponents $k_{i_{a},j_{a}}\geq0$.
\end{definition}

\begin{definition}
A Fonten\'e polynomial in $n$ variables over $\C$ in the monomial $\prod_{a=1}^{r}\star_{i_{a},j_{a}}^{k_{i_{a},j_{a}}}$ is an expression of the form
\begin{equation}
    a_{0}\star_{\infty,\infty}\boxplus a_{1}\left(\prod_{a=1}^{r}\star_{i_{a},j_{a}}^{k_{i_{a},j_{a}}}\right)\boxplus\cdots\boxplus a_{n}\left(\prod_{a=1}^{r}\star_{i_{a},j_{a}}^{k_{i_{a},j_{a}}}\right)^n.
\end{equation}
The set of all Fonten\'e polynomials in $n$-variables over $\C$ is denote by $\C[\{\star_{i_{a},j_{a}}\}_{a=1}^{r}]$.
\end{definition}

The following result follows from Theorem \ref{theo_ring_opp_prod_i_j}.
\begin{theorem}
Let $p(\star_{i_{1},j_{1}},...,\ast_{i_{r},j_{r}})$ denote a Fonten\'e polynomial in $\C[\{\star_{i_{a},j_{a}}\}_{a=1}^{r}]$. Then $(\ward_{\psi,\C}[[x]],+,p(\star_{i_{1},j_{1}},...,\star_{i_{r},j_{r}}))$ is a non-commutative non-associative ring.
\end{theorem}

\begin{definition}
Let $p(\star_{i_{1},j_{1}},...,\star_{i_{r},j_{r}})$ denote a Fonten\'e polynomial in $\C[\{\star_{i_{a},j_{a}}\}_{a=1}^{r}]$. Define the linear maps
\begin{align*}
\rho&:\C[\star_{i_{1},j_{1}},...,\star_{i_{r},j_{r}}]\rightarrow\C[\star_{i_{1}+1,j_{1}+1},...,\star_{i_{r}+1,j_{r}+1}],\\
\sigma&:\C[\star_{i_{1},j_{1}},...,\star_{i_{r},j_{r}}]\rightarrow\C[\star_{i_{1}+1,j_{1}},...,\star_{i_{r}+1,j_{r}},\star_{1,0}]
\end{align*}
by
\begin{align*}
\rho(p(\star_{i_{1},j_{1}},...,\star_{i_{r},j_{r}}))&=\rho\left(\bigboxplus_{h=0}^{n}a_{h}\left(\prod_{a=1}^{r}\star_{i_{a},j_{a}}^{k_{i_{a},j_{a}}}\right)^h\right)=\bigboxplus_{h=0}^{n}a_{h}\left(\prod_{a=1}^{r}\star_{i_{a}+1,j_{a}+1}^{k_{i_{a},j_{a}}}\right)^h,\\
\sigma(p(\star_{i_{1},j_{1}},...,\star_{i_{r},j_{r}}))&=\sigma\left(\bigboxplus_{h=0}^{n}a_{h}\left(\prod_{a=1}^{r}\star_{i_{a},j_{a}}^{k_{i_{a},j_{a}}}\right)^h\right)=\bigboxplus_{h=0}^{n}a_{h}\left(\prod_{a=1}^{r}\star_{i_{a}+1,j_{a}}^{k_{i_{a},j_{a}}}\star_{1,0}\right)^h
\end{align*}
and
$\rho(\star_{\infty,\infty})=\star_{\infty,\infty}$ and $\sigma(\star_{\infty,\infty})=\star_{1,0}$.
\end{definition}

\begin{proposition}
The map $\rho$ is a homomorphims of rings.
\end{proposition}
\begin{proof}
Follows directly from the definition of $\rho$.
\end{proof}

\begin{definition}
Let $p(\star_{i_{1},j_{1}},...,\star_{i_{r},j_{r}})$ denote a Fonten\'e polynomial in $\C[\{\star_{i_{a},j_{a}}\}_{a=1}^{r}]$. Define the $p(\star_{i_{1},j_{1}},...,\star_{i_{r},j_{r}})$-universal algebra of Ward-Fonten\'e as
\begin{equation}
    \ward_{\psi,\C}[[x]]_{p(\star_{i_{1},j_{1}},...,\star_{i_{r},j_{r}})}=\left(\ward_{\psi,\C}[[x]],+,\bigcup_{n=0}^{\infty}\bigcup_{X\in\{\rho,\sigma\}^n}X(p(\star_{i_{1},j_{1}},...,\star_{i_{r},j_{r}}))\right)
\end{equation}
where $\{\rho,\sigma\}^n$ denote the $n$-fold composition of $\{\rho,\sigma\}$ defined by $\{\rho,\sigma\}^{0}=\{1\}$, the identity map, $\{\rho,\sigma\}^{1}=\{\rho,\sigma\}$ and $\{\rho,\sigma\}^{n}=\rho\{\rho,\sigma\}^{n-1}\cup\sigma\{\rho,\sigma\}^{n-1}$ for $n\geq1$. Also, define the $\star_{\infty,\infty}$-universal algebra of Ward-Fonten\'e as
\begin{equation}
    \ward_{\psi,\C}[[x]]_{\star_{\infty,\infty}}=\left(\ward_{\psi,\C}[[x]],+,\bigcup_{n=0}^{\infty}\bigcup_{X\in\{\rho,\sigma\}^n}X(\star_{\infty,\infty})\right).
\end{equation}
\end{definition}

\section{$\psi$-Derivative of the Fonten\'e products in $\ward_{\psi,\C}[[x]]$}

In this section we will find the $\psi$-derivative of the Fonten\'e products of $\psi$-exponential generating functions in $\ward_{\psi,\C}[[x]]$. In particular we will compute the $\psi$-derivative of the ordinary product of functions from which the ordinary case and the $q$-analogue are deduced.

\begin{theorem}
Take $f,g\in\ward_{\psi,\C}[[x]]$. Then the $\psi$-derivative of the product $f\ast_{i,j}g$ is given by
\begin{equation}\label{eqn_der_ij}
    \mathbf{D}_{\psi}(f\ast_{i,j}g)=\mathbf{D}_{\psi}f(\rho(\ast_{i,j})) g+f(\sigma(\ast_{i,j}))\mathbf{D}_{\psi}g.
\end{equation}
\end{theorem}
\begin{proof}
Set $f(x)=\sum_{n=0}^{\infty}a_{n}(t^n/\psi_{n}!)$ and $g(x)=\sum_{n=0}^{\infty}b_{n}(t^n/\psi_{n}!)$. Then
\begin{eqnarray*}
\mathbf{D}_{\psi}(f(x)\ast_{i,j}g(x))&=&\mathbf{D}_{\psi}\left(\sum_{n=0}^{\infty}\sum_{k=0}^{n}F(n+i,k+j)\binom{n}{k}_{\psi}a_{k}b_{n-k}\frac{x^n}{\psi_{n}!}\right)\\
&=&\sum_{n=0}^{\infty}\sum_{k=0}^{n+1}F(n+i+1,k+j)\binom{n+1}{k}_{\psi}a_{k}b_{n+1-k}\frac{x^n}{\psi_{n}!}.
\end{eqnarray*}
We take series for $k=0$ and for $k=n+1$ from above sum 
\begin{align*}
\mathbf{D}_{\psi}(f(x)\ast_{i,j}g(x))
&=\sum_{n=0}^{\infty}F(n+i+1,j)a_{0}b_{n+1}\frac{x^n}{\psi_{n}!}\\
&\qquad+\sum_{n=0}^{\infty}\sum_{k=1}^{n}F(n+i+1,k+j)\binom{n+1}{k}_{\psi}a_{k}b_{n+1-k}\frac{x^n}{\psi_{n}!}\\
&\qquad\qquad+\sum_{n=0}^{\infty}F(n+i+1,n+1+j)a_{n+1}b_{0}\frac{x^n}{\psi_{n}!}.
\end{align*}
Now we will use the Eq.(\ref{eqn_psi_binom}) to separate the sum of the middle into two summands
\begin{multline*}
\mathbf{D}_{\psi}(f(x)\ast_{i,j}g(x))
=\sum_{n=0}^{\infty}F(n+i+1,j)a_{0}b_{n+1}\frac{x^n}{\psi_{n}!}+\\
\sum_{n=0}^{\infty}\sum_{k=1}^{n}F(n+i+1,k+j)
\left[\binom{n}{k-1}_{\psi}+F(n+1,k)\binom{n}{k}_{\psi}\right]a_{k}b_{n+1-k}\frac{t^n}{\psi_{n}!}\\
+\sum_{n=0}^{\infty}F(n+i+1,n+1+j)a_{n+1}b_{0}\frac{x^n}{\psi_{n}!}.
\end{multline*}
Then
\begin{align*}
\mathbf{D}_{\psi}(f(x)\ast_{i,j}g(x))
&=\sum_{n=0}^{\infty}F(n+i+1,j)a_{0}b_{n+1}\frac{x^n}{\psi_{n}!}\\
&+\sum_{n=0}^{\infty}\sum_{k=1}^{n}F(n+i+1,k+j)\binom{n}{k-1}_{\psi}a_{k}b_{n+1-k}\frac{t^n}{\psi_{n}!}\\
&+\sum_{n=0}^{\infty}\sum_{k=1}^{n}F(n+i+1,k+j)F(n+1,k)\binom{n}{k}_{\psi}a_{k}b_{n+1-k}\frac{t^n}{\psi_{n}!}\\
&\hspace{2em}+\sum_{n=0}^{\infty}F(n+i+1,n+1+j)a_{n+1}b_{0}\frac{x^n}{\psi_{n}!}.
\end{align*}
Now we rearrange the second sum for $k$
\begin{align*}
\mathbf{D}_{\psi}(f(x)\ast_{i,j}g(x))
&=\sum_{n=0}^{\infty}F(n+i+1,j)a_{0}b_{n+1}\frac{x^n}{\psi_{n}!}\\
&+\sum_{n=0}^{\infty}\sum_{k=0}^{n-1}F(n+i+1,k+j+1)\binom{n}{k}_{\psi}a_{k+1}b_{n-k}\frac{t^n}{\psi_{n}!}\\
&+\sum_{n=0}^{\infty}\sum_{k=1}^{n}F(n+i+1,k+j)F(n+1,k)\binom{n}{k}_{\psi}a_{k}b_{n+1-k}\frac{t^n}{\psi_{n}!}\\
&+\sum_{n=0}^{\infty}F(n+i+1,n+1+j)a_{n+1}b_{0}\frac{x^n}{\psi_{n}!}.
\end{align*}
Finally, we join the first sum with the third sum and the second sum with the fourth sum
\begin{align*}
\mathbf{D}_{\psi}(f(x)\ast_{i,j}g(x))
&=\sum_{n=0}^{\infty}\sum_{k=0}^{n}F(n+i+1,k+j+1)\binom{n}{k}_{\psi}a_{k+1}b_{n-k}\frac{t^n}{\psi_{n}!}\\
&\qquad\sum_{n=0}^{\infty}\sum_{k=0}^{n}F(n+i+1,k+j)F(n+1,k)\binom{n}{k}_{\psi}a_{k}b_{n+1-k}\frac{t^n}{\psi_{n}!}\\
&=\mathbf{D}_{\psi}f(x)\ast_{i+1,j+1}g(x)+f(x)\ast_{i+1,j}\ast_{1,0}\mathbf{D}_{\psi}g(x).
\end{align*}
In this way we reach the desired result.
\end{proof}

For the opposite product $\star_{i,j}$ we obtain the following result
\begin{theorem}
Tome $f,g\in\ward_{\psi,\C}[[x]]$. Then the $\psi$-derivative of the product $f\star_{i,j}g$ is given by
\begin{equation}
    \mathbf{D}_{\psi}(f\star_{i,j}g)=\mathbf{D}_{\psi}f(\sigma(\star_{i,j}))g+f(\rho(\star_{i,j}))\mathbf{D}_{\psi}g.
\end{equation}
\end{theorem}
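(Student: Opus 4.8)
The cleanest route is to transport Theorem~\ref{theo_der_map_mu} across the opposite structure rather than recompute from scratch. Recall the intertwining identity established at the end of Section~3,
\[
f\Bigl(\prod_{i=1}^{l}\ast_{a_{i},b_{i}}\Bigr)g=g\Bigl(\prod_{i=1}^{l}\star_{a_{i},b_{i}}\Bigr)f,\qquad f,g\in\ward_{\psi,\C}[[x]].
\]
Applied with the one-term list $(a_{1},b_{1})=(i,j)$ it gives $f\star_{i,j}g=g\ast_{i,j}f$. Since $\mathbf{D}_{\psi}$ is a fixed $\C$-linear operator on $\ward_{\psi,\C}[[x]]$ that does not refer to any particular product, I would then simply write
\[
\mathbf{D}_{\psi}(f\star_{i,j}g)=\mathbf{D}_{\psi}(g\ast_{i,j}f)=\mathbf{D}_{\psi}g\ast_{i+1,j+1}f+g\ast_{i+1,j}\ast_{1,0}\mathbf{D}_{\psi}f,
\]
invoking Theorem~\ref{theo_der_map_mu} with the roles of $f$ and $g$ exchanged.

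It then remains to push both summands back into $\star$-notation. For the first, the identity with list $(i+1,j+1)$ gives $\mathbf{D}_{\psi}g\ast_{i+1,j+1}f=f\star_{i+1,j+1}\mathbf{D}_{\psi}g$. For the second, the identity with the two-term list $\bigl((i+1,j),(1,0)\bigr)$ gives $g\ast_{i+1,j}\ast_{1,0}\mathbf{D}_{\psi}f=\mathbf{D}_{\psi}f\star_{i+1,j}\star_{1,0}g$. Substituting and reordering the two terms yields exactly
\[
\mathbf{D}_{\psi}(f\star_{i,j}g)=\mathbf{D}_{\psi}f\star_{i+1,j}\star_{1,0}g+f\star_{i+1,j+1}\mathbf{D}_{\psi}g,
\]
which is the assertion. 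Because this argument uses only previously proved facts, the sole point to verify is that the Section~3 identity is applied to the right lists — in particular that the \emph{ordered} two-factor product $\ast_{i+1,j}\ast_{1,0}$ corresponds to $\star_{i+1,j}\star_{1,0}$ with the factors kept in the same order, which is precisely the content of the quoted identity.

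If a self-contained computation were preferred, I would instead mirror the proof of Theorem~\ref{theo_der_map_mu} line for line. Writing $f=\sum_{n}a_{n}x^{n}/\psi_{n}!$ and $g=\sum_{n}b_{n}x^{n}/\psi_{n}!$, apply $\mathbf{D}_{\psi}$ termwise to $f\star_{i,j}g$, peel off the $k=0$ and $k=n+1$ summands of the inner sum, and rewrite the remaining block $\sum_{k=1}^{n}$ using the symmetric recurrence (\ref{eqn_psi_binom_sim}) — $\binom{n+1}{k}_{\psi}=\binom{n}{k}_{\psi}+F(n+1,n-k+1)\binom{n}{k-1}_{\psi}$ — in place of (\ref{eqn_psi_binom}). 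In the sum carrying $\binom{n}{k-1}_{\psi}$ reindex $k\mapsto k+1$; then the $k=0$ boundary term glues onto the $\binom{n}{k}_{\psi}$-sum to complete its range and reproduce $f\star_{i+1,j+1}\mathbf{D}_{\psi}g$, while the $k=n+1$ boundary term — matched to the reindexed sum via $F(n+1,0)=1$ (immediate from $\psi_{n}-\psi_{0}=F(n,0)\psi_{n}$ and $\psi_{0}=0$) — completes that range and reproduces $\mathbf{D}_{\psi}f\star_{i+1,j}\star_{1,0}g$. On this route the one genuine hazard is the index bookkeeping inside the kernel $F(n+i,\,n-k+j)$: the ``$n-k$'' slot is exactly the feature separating $\star_{i,j}$ from $\ast_{i,j}$, and it must be tracked faithfully through the shift $k\mapsto k+1$, since an off-by-one there destroys the cancellation.
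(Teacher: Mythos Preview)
Your proposal is correct. The paper in fact gives no proof at all for this theorem --- it states the result immediately after Theorem~\ref{theo_der_map_mu} as the symmetric counterpart and moves on --- so both of your routes (transport via the opposite identity $f\prod\ast_{a_i,b_i}g=g\prod\star_{a_i,b_i}f$ from the end of Section~3, and the direct computation using recurrence~(\ref{eqn_psi_binom_sim}) in place of~(\ref{eqn_psi_binom})) are legitimate realizations of that implicit ``by symmetry''; the transport argument is the cleaner of the two and makes the duality explicit.
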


Now we will obtain the $\psi$-derivative of the ordinary product of functions
\begin{theorem}\label{theo_prod_leibniz}
Take $f,g$ in $\ward_{\psi,\C}[[x]]$.Then the $\psi$-derivative $\mathbf{D}_{\psi}$ of the product $fg$ is
\begin{eqnarray}
    \mathbf{D}_{\psi}(f(x)g(x))&=&f(x)\ast_{1,0}\mathbf{D}_{\psi}g(x)+\mathbf{D}_{\psi}f(x)\cdot g(x)\\
    &=&\mathbf{D}_{\psi}f(x)\star_{1,0}g(x)+f(x)\mathbf{D}_{\psi}g(x).
\end{eqnarray}
\end{theorem}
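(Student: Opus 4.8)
The plan is to run essentially the same computation as in the proof of Theorem~\ref{theo_der_map_mu}, but with the ordinary product in place of $\ast_{i,j}$; since the ordinary product is the ``trivial kernel'' case, the bookkeeping is lighter and no new idea is needed.

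First I would write $f(x)=\sum_{n\ge0}a_n\frac{x^n}{\psi_n!}$ and $g(x)=\sum_{n\ge0}b_n\frac{x^n}{\psi_n!}$, record the elementary fact that the $n$-th coefficient of $\mathbf{D}_\psi f(x)=\sum_{n\ge0}a_{n+1}\frac{x^n}{\psi_n!}$ is $a_{n+1}$ (and similarly for $g$), and apply $\mathbf{D}_\psi$ term-by-term to the ordinary product to obtain
\[
\mathbf{D}_\psi(f(x)g(x))=\sum_{n\ge0}\sum_{k=0}^{n+1}\binom{n+1}{k}_\psi a_k b_{n+1-k}\frac{x^n}{\psi_n!}.
\]
Then I would substitute the Fontané recurrence (\ref{eqn_psi_binom}), $\binom{n+1}{k}_\psi=\binom{n}{k-1}_\psi+F(n+1,k)\binom{n}{k}_\psi$, valid for $0\le k\le n+1$ under the conventions $\binom{n}{-1}_\psi=\binom{n}{n+1}_\psi=0$, and split the double sum into two pieces.

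For the piece carrying the factor $\binom{n}{k-1}_\psi$ I would shift $k\mapsto k+1$; since $a_{k+1}$ is the $k$-th coefficient of $\mathbf{D}_\psi f$, this piece reassembles into the ordinary product $\mathbf{D}_\psi f(x)\cdot g(x)$. For the piece carrying $F(n+1,k)\binom{n}{k}_\psi$ I would note that $b_{n+1-k}$ is the $(n-k)$-th coefficient of $\mathbf{D}_\psi g$ and that $F(n+1,k)$ is exactly the kernel $F(n+i,k+j)$ of $\ast_{i,j}$ at $(i,j)=(1,0)$, so this piece equals $f(x)\ast_{1,0}\mathbf{D}_\psi g(x)$; adding the two gives the first identity. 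To obtain the second identity I would repeat the argument verbatim, replacing (\ref{eqn_psi_binom}) by its symmetric form (\ref{eqn_psi_binom_sim}), $\binom{n+1}{k}_\psi=\binom{n}{k}_\psi+F(n+1,n-k+1)\binom{n}{k-1}_\psi$: the $\binom{n}{k}_\psi$-piece collapses directly to $f(x)\cdot\mathbf{D}_\psi g(x)$, while the $F(n+1,n-k+1)\binom{n}{k-1}_\psi$-piece, after the same shift $k\mapsto k+1$, is recognized as the $(i,j)=(1,0)$ instance of the opposite kernel $F(n+i,n-k+j)$, that is, $\mathbf{D}_\psi f(x)\star_{1,0}g(x)$.

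The only point needing care --- and it is the very point handled explicitly in the proof of Theorem~\ref{theo_der_map_mu} by first peeling off the $k=0$ and $k=n+1$ terms --- is the boundary behaviour of the index: the conventions $\binom{n}{-1}_\psi=\binom{n}{n+1}_\psi=0$ let one extend or truncate the summation ranges so that the two pieces recombine cleanly, and no genuine obstacle arises. This also makes precise in what sense the statement is just the degenerate, trivial-kernel case of Theorem~\ref{theo_der_map_mu}.
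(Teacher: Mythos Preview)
Your proposal is correct and is precisely the approach indicated by the paper, whose proof consists of the single line ``We use a proof similar to the proof of the theorem \ref{theo_der_map_mu}.'' The only cosmetic difference is that you handle the boundary terms via the conventions $\binom{n}{-1}_\psi=\binom{n}{n+1}_\psi=0$ rather than by explicitly peeling off $k=0$ and $k=n+1$ as in that earlier proof.
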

\begin{proof}
We use a proof similar to the proof of the Eq.(\ref{eqn_der_ij}).
\end{proof}

The $\psi$-derivative of the product of functions is well defined as we can notice by making $g(x)=1$ and then interchanging $f$ with $g$.
\begin{equation*}
    \mathbf{D}_{\psi}(f(x))=\mathbf{D}_{\psi}(f(x)\cdot1)=f(x)\ast_{1,0}0+\mathbf{D}_{\psi}f(x)=\mathbf{D}_{\psi}f(x)
\end{equation*}
and
\begin{equation*}
    \mathbf{D}_{\psi}(f(x))=\mathbf{D}_{\psi}(1\cdot f(x))=1\ast_{1,0}\mathbf{D}_{\psi}f(x)=\mathbf{D}_{\psi}f(x).
\end{equation*}

On the other hand, when $\psi_{n}=n$, then we obtain Leibniz's ordinary rule for the product of functions
\begin{equation*}
    \mathbf{D}(f(x)g(x))=(\mathbf{D}f)(x)g(x)+f(x)(\mathbf{D}g)(x).
\end{equation*}
and when $\psi_{n}=[n]$, then we obtain the $q$-rule of Leibniz
\begin{eqnarray*}
    \mathbf{D}_{q}(f(x)g(x))&=&(\mathbf{D}_{q}f)(x)g(x)+f(x)\ast_{1,0}(\mathbf{D}_{q}g)(x)\\
    &=&(\mathbf{D}_{q}f)(x)g(x)+f(qx)(\mathbf{D}_{q}g)(x).
\end{eqnarray*}
and by symmetry
\begin{eqnarray*}
    \mathbf{D}_{q}(f(x)g(x))&=&f(x)(\mathbf{D}_{q}g)(x)+(\mathbf{D}_{q}f)(x)\star_{1,0}g(x)\\
    &=&f(x)(\mathbf{D}_{q}g)(x)+(\mathbf{D}_{q}f)(x)g(qx)
\end{eqnarray*}
thus reaching the two known results for the $q$-derivative of the product of functions. When $\psi_{n}=[n]_{p,q}$, we obtain another $(p,q)$-analogue of Leibniz's rule
\begin{eqnarray*}
    \mathbf{D}_{p,q}(f(x)g(x))&=&(\mathbf{D}_{p,q}f)(x)g(x)+f(x)\ast_{1,0}(\mathbf{D}_{p,q}g)(x)\\
    &=&f(x)(\mathbf{D}_{p,q}g)(x)+(\mathbf{D}_{p,q}f)(x)\star_{1,0}g(x)
\end{eqnarray*}
where
\begin{align*}
    f(x)\ast_{1,0}(\mathbf{D}_{p,q}g)(x)&=f(qx)(\mathbf{D}_{p,q}g)(x)+g(px)(\mathbf{D}_{p,q}f)(x)-(\mathbf{D}_{p,q}f)(x)g(x),\\
    (\mathbf{D}_{p,q}f)(x)\star_{1,0}g(x)&=f(px)(\mathbf{D}_{p,q}g)(x)+g(qx)(\mathbf{D}_{p,q}f)(x)-f(x)(\mathbf{D}_{p,q}g)(x)
\end{align*}
When $\psi_{n}=T_{n}$, then
\begin{align*}
    \mathbf{D}_{T}(f(x)g(x))&=(\mathbf{D}_{T}f)(x)g(x)+f(x)\ast_{1,0}(\mathbf{D}_{T}g)(x),\\
    &=f(x)(\mathbf{D}_{T}g)(x)+(\mathbf{D}_{T}f)(x)\star_{1,0}g(x)
\end{align*}
where $f\ast_{1,0}(\mathbf{D}_{T}g)(x)=f(x)(\mathbf{D}_{T}g)(x)+xf^{\prime}(x)g^{\prime}(x)$ and $(\mathbf{D}_{T}f)(x)\star_{1,0}g(x)=(\mathbf{D}_{T}f)(x)g(x)+xf^{\prime}(x)g^{\prime}(x)$. Finally, if $\psi_{n}=\mathrm{B}_{n}$, then
\begin{align*}
    \mathbf{D}_{B}(f(x)g(x))&=(\mathbf{D}_{B}f)(x)g(x)+f(x)\ast_{1,0}(\mathbf{D}_{B}g)(x),\\
    &=f(x)(\mathbf{D}_{B}g)(x)+(\mathbf{D}_{B}f)(x)\star_{1,0}g(x)
\end{align*}
where
\begin{align*}
    f(x)\ast_{1,0}(\mathbf{D}_{B}g)(x)&=\frac{1}{e}\sum_{k=0}^{\infty}f(kx)\frac{(\mathbf{D}_{k,0}g)(x)}{k!}+g(0)(\mathbf{D}_{B}f)(x)-(\mathbf{D}_{B}f)(x)g(x),\\
    (\mathbf{D}_{B}f)(x)\star_{1,0}g(x)&=\frac{1}{e}\sum_{k=0}^{\infty}g(kx)\frac{(\mathbf{D}_{k,0}f)(x)}{k!}+f(0)(\mathbf{D}_{B}g)(x)-f(x)(\mathbf{D}_{B}g)(x).
\end{align*}

\begin{example}
We will use the Theorem \ref{theo_prod_leibniz} to show that $\mathbf{D}_{\psi}(x^{n+m})=\psi_{n+m}x^{n+m-1}$. On the one hand, using the product $\ast_{1,0}$ we have
\begin{eqnarray*}
    \mathbf{D}_{\psi}(x^{n+m})&=&\mathbf{D}_{\psi}(x^{n}x^{m})\\
    &=&x^{n}\ast_{1,0}\psi_{m}x^{m-1}+\psi_{n}x^{n+m-1}\\
    &=&(\psi_{m}F(n+m,n)+\psi_{n})x^{n+m-1}\\
    &=&\left(\psi_{m}\frac{\psi_{n+m}-\psi_{n}}{\psi_{m}}+\psi_{n}\right)x^{n+m-1}\\
    &=&\psi_{n+m}x^{n+m-1}.
\end{eqnarray*}
On the other hand, with the product $\star_{1,0}$ we obtain
\begin{eqnarray*}
    \mathbf{D}_{\psi}(x^{n+m})&=&\mathbf{D}_{\psi}(x^{n}x^{m})\\
    &=&\psi_{n}x^{n-1}\star_{1,0}x^{m}+\psi_{m}x^{n+m-1}\\
    &=&(\psi_{n}F(n+m,m)+\psi_{m})x^{n+m-1}\\
    &=&\left(\psi_{n}\frac{\psi_{n+m}-\psi_{m}}{\psi_{n}}+\psi_{m}\right)x^{n+m-1}\\
    &=&\psi_{n+m}x^{n+m-1}.
\end{eqnarray*}
\end{example}

Now we will give Leibniz's rule for the products $\prod_{i=1}^{l}\ast_{a_{i},b_{i}}$ and $\prod_{i,j=1}^{l}\star_{a_{i},b_{j}}$ of functions.
\begin{theorem}\label{theo_der_prod_mu_}
Take $f,g$ in $\ward_{\psi,\C}[[x]]$. Then
\begin{align*}    \mathbf{D}_{\psi}\left(f\prod_{i=1}^{l}\ast_{a_{i},b_{i}}g\right)&=(\mathbf{D}_{\psi}f)\rho\left(\prod_{i=1}^{l}\ast_{a_{i},b_{i}}\right)g+f\sigma\left(\prod_{i=1}^{l}\ast_{a_{i},b_{i}}\right)\mathbf{D}_{\psi}g,\\
\mathbf{D}_{\psi}\left(f\prod_{i=1}^{l}\star_{a_{i},b_{i}}g\right)
&=f\rho\left(\prod_{i=1}^{l}\star_{a_{i},b_{i}}\right)\mathbf{D}_{\psi}g+(\mathbf{D}_{\psi}f)\sigma\left(\prod_{i=1}^{l}\star_{a_{i},b_{i}}\right)g.
\end{align*}
\end{theorem}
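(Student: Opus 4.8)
The plan is to prove the first identity by reproducing, essentially verbatim, the computation behind Theorem~\ref{theo_der_map_mu}, the only difference being that the single kernel factor $F(n+i,k+j)$ is everywhere replaced by the product $\prod_{h=1}^{l}F(n+a_{h},k+b_{h})$; the second identity then follows with no extra work from the relation $f\prod_{h=1}^{l}\ast_{a_{h},b_{h}}g=g\prod_{h=1}^{l}\star_{a_{h},b_{h}}f$ established earlier, together with the linearity of $\mathbf{D}_{\psi}$. No genuinely new idea is required; all the content is already in Theorem~\ref{theo_der_map_mu}.

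For the $\ast$-identity, write $f=\sum_{n}a_{n}x^{n}/\psi_{n}!$ and $g=\sum_{n}b_{n}x^{n}/\psi_{n}!$ and recall that $\mathbf{D}_{\psi}$ acts on coefficients by the shift $\sum_{n}c_{n}x^{n}/\psi_{n}!\mapsto\sum_{n}c_{n+1}x^{n}/\psi_{n}!$, so that
\begin{equation*}
\mathbf{D}_{\psi}\Big(f\prod_{h=1}^{l}\ast_{a_{h},b_{h}}g\Big)=\sum_{n=0}^{\infty}\sum_{k=0}^{n+1}\prod_{h=1}^{l}F(n+1+a_{h},k+b_{h})\binom{n+1}{k}_{\psi}a_{k}b_{n+1-k}\frac{x^{n}}{\psi_{n}!}.
\end{equation*}
First I would split off the boundary terms $k=0$ and $k=n+1$, using $\binom{n+1}{0}_{\psi}=\binom{n+1}{n+1}_{\psi}=1$; then apply the Fontan\'e recurrence \eqref{eqn_psi_binom} to $\binom{n+1}{k}_{\psi}$ over the range $1\le k\le n$, splitting the middle double sum into a piece carrying $\binom{n}{k-1}_{\psi}$ and a piece carrying $F(n+1,k)\binom{n}{k}_{\psi}$. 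The $F(n+1,k)\binom{n}{k}_{\psi}$-piece absorbs the $k=0$ boundary term — this uses $F(n+1,0)=1$, forced by $\psi_{n+1}-\psi_{0}=\psi_{n+1}=F(n+1,0)\psi_{n+1}$ — giving precisely $f\big(\prod_{h=1}^{l}\ast_{a_{h}+1,b_{h}}\ast_{1,0}\big)\mathbf{D}_{\psi}g$, a concatenated product with $l+1$ kernel factors. After the reindexing $k\mapsto k+1$, the $\binom{n}{k-1}_{\psi}$-piece absorbs the $k=n+1$ boundary term and becomes $\mathbf{D}_{\psi}f\big(\prod_{h=1}^{l}\ast_{a_{h}+1,b_{h}+1}\big)g$. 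Adding the two contributions yields the first formula.

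For the $\star$-identity I would apply the $\ast$-identity just proved to $g\prod_{h=1}^{l}\ast_{a_{h},b_{h}}f$, use $\mathbf{D}_{\psi}\big(f\prod_{h=1}^{l}\star_{a_{h},b_{h}}g\big)=\mathbf{D}_{\psi}\big(g\prod_{h=1}^{l}\ast_{a_{h},b_{h}}f\big)$, and translate each $\ast$-product on the right back into a $\star$-product by means of the dictionary $u\prod_{h=1}^{m}\ast_{c_{h},d_{h}}v=v\prod_{h=1}^{m}\star_{c_{h},d_{h}}u$; alternatively one can run the same peeling argument directly, now invoking the symmetric recurrence \eqref{eqn_psi_binom_sim} in place of \eqref{eqn_psi_binom} (and again only $F(n+1,0)=1$, never an undetermined quantity such as $F(n+1,n+1)$). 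I expect the only real difficulty to be the bookkeeping: carrying the $(l+1)$-fold products cleanly through the manipulation and checking that, after the recurrence has been applied and the indices shifted, the two recombined double sums are exactly the coefficient expansions of $\mathbf{D}_{\psi}f\,\prod_{h=1}^{l}\ast_{a_{h}+1,b_{h}+1}\,g$ and of $f\,\prod_{h=1}^{l}\ast_{a_{h}+1,b_{h}}\ast_{1,0}\,\mathbf{D}_{\psi}g$ respectively.
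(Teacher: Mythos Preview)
Your proposal is correct and matches the paper's approach exactly: the paper simply states that ``the proof is analogous to that given for Theorem~\ref{theo_der_map_mu}'', and what you have sketched is precisely that analogue, with the single kernel $F(n+i,k+j)$ replaced by the product $\prod_{h=1}^{l}F(n+a_{h},k+b_{h})$. Your observation that only $F(n+1,0)=1$ is needed to merge the boundary terms, and your derivation of the $\star$-identity via the opposite relation $f\prod_{h}\ast_{a_{h},b_{h}}g=g\prod_{h}\star_{a_{h},b_{h}}f$, are both sound and in the spirit of the paper.
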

\begin{proof}
The proof is analogous to that given for the Eq.(\ref{eqn_der_ij}).
\end{proof}
When $\psi_{n}=[n]$ we get the following result
\begin{align*}
&\mathbf{D}_{q}\left(f(x)\left(\prod_{i=1}^{l}\ast_{a_{i},b_{i}}\right)g(x)\right)\\
&\hspace{5em}=\mathbf{D}_{q}\left(q^{b_{1}+\cdots+b_{l}}f(q^{l}x)g(x)\right)\\
&\hspace{5em}=q^{b_{1}+\cdots+b_{l}+l}\mathbf{D}_{q}f(q^{l}x)\cdot g(x)+q^{b_{1}+\cdots+b_{l}}f(q^{l+1}x)\mathbf{D}_{q}g(x).
\end{align*}
Then
\begin{equation*}
    \mathbf{D}_{q}(f(q^{l}x)g(x))=q^{l}\mathbf{D}_{q}f(q^{l}x)\cdot g(x)+f(q^{l}x)\mathbf{D}_{q}g(x).
\end{equation*}
If we make $g(x)=1$, we get
\begin{equation*}
    \mathbf{D}_{q}f(q^{l}x)=q^{l}(\mathbf{D}_{q}f)(q^{l}x)
\end{equation*}
which is the chain rule for the function $f(q^{l}x)$.

We ended with the following results.
\begin{theorem}
Take $p(\ast_{i_{1},j_{1}},...,\ast_{i_{r},j_{r}})\in\C[\{\ast_{i_{a},j_{a}}\}_{a=1}^r]$. Then
\begin{align*}
\mathbf{D}_{\psi}(f(p(\ast_{i_{1},j_{1}},...,\ast_{i_{r},j_{r}}))g)&=(\mathbf{D}_{\psi}f)(\rho(p(\ast_{i_{1},j_{1}},...,\ast_{i_{r},j_{r}}))g\\
&\hspace{5em}+f(\sigma(p(\ast_{i_{1},j_{1}},...,\ast_{i_{r},j_{r}})))\mathbf{D}_{\psi}g.
\end{align*}
Equally, take $p(\star_{i_{1},j_{1}},...,\star_{i_{r},j_{r}})\in\C[\{\star_{i_{a},j_{a}}\}_{a=1}^r]$. Then
\begin{align*}
\mathbf{D}_{\psi}(f(p(\star_{i_{1},j_{1}},...,\star_{i_{r},j_{r}}))g)&=(\mathbf{D}_{\psi}f)(\rho(p(\star_{i_{1},j_{1}},...,\star_{i_{r},j_{r}}))g\\
&\hspace{5em}+f(\sigma(p(\star_{i_{1},j_{1}},...,\star_{i_{r},j_{r}})))\mathbf{D}_{\psi}g.
\end{align*}
\end{theorem}

Finally, the following result is easily deduced.
\begin{theorem}
The universal algebras 
\begin{align*}
&\ward_{\psi,\C}[[x]]_{p(\ast_{i_{1},j_{1}},...,\ast_{i_{r},j_{r}})},\hspace{0.5em}\ward_{\psi,\C}[[x]]_{\ast_{\infty,\infty}},\\
&\ward_{\psi,\C}[[x]]_{p(\star_{i_{1},j_{1}},...,\star_{i_{r},j_{r}})},\hspace{0.5em}
\ward_{\psi,\C}[[x]]_{\star_{\infty,\infty}}.
\end{align*}
are differential.
\end{theorem}

\section{Leibniz's general $\psi$-rule}

In this section we will find the $n$-th $\psi$-derivative of the ordinary product of $\psi$-exponential generating functions in $\ward_{\psi,\C}[[x]]$. For the ordinary derivative it is known that Leibniz's general rule is
\begin{equation*}
    \mathbf{D}^{n}(f(x)g(x))=\sum_{k=0}^{n}\binom{n}{k}\mathbf{D}^{n-k}f(x)\mathbf{D}^{k}g(x)
\end{equation*}
and the $q$-analogue of this rule is
\begin{equation*}
    \mathbf{D}_{q}^{n}(f(x)g(x))=\sum_{k=0}^{n}\binom{n}{k}_{q}\mathbf{D}_{q}^{n-k}f(q^{k}x)\mathbf{D}_{q}^{k}g(x).
\end{equation*}
The idea of our construction of a general product $\psi$-rule is to use an analogous of binomial coefficients defined on $\C_{\psi}^{\ast}$ such that we can recover the above identities.

\begin{definition}\label{def_ope_binom}
We define the binomial operator $\superbinom{n}{k}_{\ast}\in\C_{\psi}^{\ast}$ as the operator $\superbinom{n}{k}_{\ast}:\ward_{\psi,\C}[[x]]\times\ward_{\psi,\C}[[x]]\rightarrow\ward_{\psi,\C}[[x]]$ satisfying
\begin{eqnarray}
    \superbinom{n}{0}_{\ast}&=&\ast_{\infty,\infty},\ n\geq0,\\
    \superbinom{n}{k}_{\ast}&=&\rho\superbinom{n-1}{k}_{\ast}\boxplus\sigma\superbinom{n-1}{k-1}_{\ast}\label{eqn_recu_sbinom},\ \ 1\leq k\leq n-1,\ n\geq2\\
    \superbinom{n}{n}_{\ast}&=&\prod_{i=1}^{n}\ast_{i,0},\ n\geq1.
\end{eqnarray}
\end{definition}

In Table 1 we can see some values of binomial operators.

\begin{table}[H]
    \centering
    \begin{tabular}{l|c|c|c|c|c}
    \backslashbox{$n$}{$k$}& 0&1 & 2& 3   \\\hline
         0& $\ast_{\infty,\infty}$&  &  & \\\hline
         1& $\ast_{\infty,\infty}$&  $\ast_{1,0}$&  &\\\hline
         2& $\ast_{\infty,\infty}$&  $\ast_{1,0}\boxplus\ast_{2,1}$&  $\ast_{1,0}\ast_{2,0}$&  \\\hline
         3& $\ast_{\infty,\infty}$&  $\ast_{1,0}\ast_{2,1}\ast_{3,2}$&  $\ast_{1,0}\ast_{2,0}\boxplus\ast_{1,0}\ast_{3,1}\boxplus\ast_{2,1}\ast_{3,1}$&  $\ast_{1,0}\ast_{2,0}\ast_{3,0}$\\\hline 
    \end{tabular}
    \caption{$\C_{\psi}^{\ast}$-Analogue of Pascal's Triangle}
    \label{tab:my_label}
\end{table}

In the following theorem we will give the $n$-th derivative of product $fg$. 

\begin{theorem}\label{theo_gen_leibniz}
Take $f,g\in\ward_{\psi,\C}[[x]]$. Then the $\psi$-rule general of Leibniz is
\begin{equation}
    \mathbf{D}_{\psi}^{n}(f(x)g(x))=\sum_{k=0}^{n}\superbinom{n}{k}_{\ast}(\mathbf{D}_{\psi}^{n-k}f(x),\mathbf{D}_{\psi}^{k}g(x))
\end{equation}
\end{theorem}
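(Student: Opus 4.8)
The plan is to prove the identity by induction on $n$. The base case $n=1$ is precisely Theorem~\ref{theo_prod_leibniz}, which gives $\mathbf{D}_{\psi}(fg)=\superbinom{1}{0}_{\ast}(\mathbf{D}_{\psi}f,g)+\superbinom{1}{1}_{\ast}(f,\mathbf{D}_{\psi}g)$ since $\superbinom{1}{0}_{\ast}=\ast_{\infty,\infty}$ is the ordinary product and $\superbinom{1}{1}_{\ast}=\ast_{1,0}$. For the inductive step, I would assume the formula holds for $n$ and apply $\mathbf{D}_{\psi}$ once more to $\mathbf{D}_{\psi}^{n}(fg)=\sum_{k=0}^{n}\superbinom{n}{k}_{\ast}(\mathbf{D}_{\psi}^{n-k}f,\mathbf{D}_{\psi}^{k}g)$, using linearity of $\mathbf{D}_{\psi}$ to move it inside the sum.

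The crux is then to understand how $\mathbf{D}_{\psi}$ interacts with an arbitrary binomial operator $\superbinom{n}{k}_{\ast}$, which by construction is a $\boxplus$-sum of concatenated products $\prod_{h}\ast_{a_h,b_h}$. Theorem~\ref{theo_der_prod_mu_} tells us exactly this for a single concatenated product: $\mathbf{D}_{\psi}(f(\prod_{i}\ast_{a_i,b_i})g)=\mathbf{D}_{\psi}f(\prod_{i}\ast_{a_i+1,b_i+1})g+f(\prod_{i}\ast_{a_i+1,b_i}\ast_{1,0})\mathbf{D}_{\psi}g$, and the two index-shift operations appearing here are precisely $\rho$ and $\sigma$ from Definition~\ref{def_ope_binom}. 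Extending this over the $\boxplus$-sum (legitimate since $\mathbf{D}_{\psi}$ is additive and distributes over $\boxplus$ by the last theorem of Section~4) yields the operator identity
\begin{equation*}
\mathbf{D}_{\psi}\left(\superbinom{n}{k}_{\ast}(f,g)\right)=\rho\superbinom{n}{k}_{\ast}(\mathbf{D}_{\psi}f,g)+\sigma\superbinom{n}{k}_{\ast}(f,\mathbf{D}_{\psi}g),
\end{equation*}
where $\rho,\sigma$ act on the operator, sending each $\ast$-factor to its shifted version. Applying this with $f\mapsto\mathbf{D}_{\psi}^{n-k}f$ and $g\mapsto\mathbf{D}_{\psi}^{k}g$ gives $\mathbf{D}_{\psi}^{n+1}(fg)=\sum_{k=0}^{n}\left[\rho\superbinom{n}{k}_{\ast}(\mathbf{D}_{\psi}^{n+1-k}f,\mathbf{D}_{\psi}^{k}g)+\sigma\superbinom{n}{k}_{\ast}(\mathbf{D}_{\psi}^{n-k}f,\mathbf{D}_{\psi}^{k+1}g)\right]$.

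The final step is a reindexing: shift $k\mapsto k-1$ in the $\sigma$-sum, collect the coefficient of $(\mathbf{D}_{\psi}^{n+1-k}f,\mathbf{D}_{\psi}^{k}g)$, and recognize it as $\rho\superbinom{n}{k}_{\ast}\boxplus\sigma\superbinom{n}{k-1}_{\ast}=\superbinom{n+1}{k}_{\ast}$ by the recurrence~(\ref{eqn_recu_sbinom}) for $1\le k\le n$. The boundary terms $k=0$ and $k=n+1$ must be checked separately: the $k=0$ term is $\rho\superbinom{n}{0}_{\ast}=\rho(\ast_{\infty,\infty})=\ast_{\infty,\infty}=\superbinom{n+1}{0}_{\ast}$ (here one uses that $\rho$ fixes the ordinary product, consistent with the convention $\superbinom{n}{0}_{\ast}=\ast_{\infty,\infty}$), and the $k=n+1$ term is $\sigma\superbinom{n}{n}_{\ast}=\sigma(\prod_{i=1}^{n}\ast_{i,0})=\prod_{i=1}^{n}\ast_{i+1,0}\,\ast_{1,0}=\prod_{i=1}^{n+1}\ast_{i,0}=\superbinom{n+1}{n+1}_{\ast}$.

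The main obstacle I anticipate is bookkeeping rather than conceptual: one must verify carefully that the operations $\rho$ and $\sigma$ appearing in the derivative formula of Theorem~\ref{theo_der_prod_mu_} literally coincide with the linear maps $\rho,\sigma$ of Definition~\ref{def_ope_binom} on every concatenated product, including the edge cases where the product is a single factor or the empty/trivial product $\ast_{\infty,\infty}$, and that $\rho$ and $\sigma$ genuinely descend to well-defined linear maps on $\C\{\ast_{i,j}\}$ compatible with $\boxplus$ (so that applying them termwise to a $\boxplus$-sum is unambiguous). Once that compatibility is pinned down, the induction closes mechanically.
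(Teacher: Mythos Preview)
Your proposal is correct and follows essentially the same route as the paper: induction on $n$, applying Theorem~\ref{theo_der_prod_mu_} termwise to recognize the $\rho$ and $\sigma$ shifts, reindexing the $\sigma$-sum, invoking the recurrence~(\ref{eqn_recu_sbinom}) for the middle terms, and handling the boundary cases $\rho\superbinom{n}{0}_{\ast}=\superbinom{n+1}{0}_{\ast}$ and $\sigma\superbinom{n}{n}_{\ast}=\superbinom{n+1}{n+1}_{\ast}$ exactly as you describe. Your bookkeeping concern about $\rho,\sigma$ on $\ast_{\infty,\infty}$ and on $\boxplus$-sums is well placed but resolves trivially, and the paper does not dwell on it either.
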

\begin{proof}
The proof will be by induction. Assume true for $n$ and let us compute the $\psi$-derivative of $\mathbf{D}_{\psi}^{n}(f(x)g(x))$. Applying the Theorem \ref{theo_der_prod_mu_} to the product 
\begin{equation*}
\superbinom{n}{k}_{\ast}(\mathbf{D}_{\psi}^{n-k}f(x),\mathbf{D}_{\psi}^{k}g(x))    
\end{equation*}
we obtain
\begin{align*}
    \mathbf{D}_{\psi}^{n+1}(f(x)g(x))&=\mathbf{D}_{\psi}\left(\sum_{k=0}^{n}\superbinom{n}{k}_{\ast}(\mathbf{D}_{\psi}^{n-k}f(x),\mathbf{D}_{\psi}^{k}g(x))\right)\\
    &=\mathbf{D}_{\psi}\left(\sum_{k=0}^{n}\mathbf{D}_{\psi}^{n-k}f(x)\superbinom{n}{k}_{\ast}\mathbf{D}_{\psi}^{k}g(x)\right)\\
    &=\sum_{k=0}^{n}\Bigg(\mathbf{D}_{\psi}^{n+1-k}f(x)\rho\superbinom{n}{k}_{\ast}\mathbf{D}_{\psi}^{k}g(x)
    \hspace{0em}+\mathbf{D}_{\psi}^{n-k}f(x)\sigma\superbinom{n}{k}_{\ast}\mathbf{D}_{\psi}^{k+1}g(x)\Bigg)\\
    &=\sum_{k=0}^{n}\mathbf{D}_{\psi}^{n+1-k}f(x)\rho\superbinom{n}{k}_{\ast}\mathbf{D}_{\psi}^{k}g(x)
    \hspace{0em}+\sum_{k=0}^{n}\mathbf{D}_{\psi}^{n-k}f(x)\sigma\superbinom{n}{k}_{\ast}\mathbf{D}_{\psi}^{k+1}g(x).
\end{align*}
Now we rewrite the second series for $k$ and draw summands from the two series, i.e,
\begin{align*}
&\mathbf{D}_{\psi}^{n+1}(f(x)g(x))\\
    &\hspace{3em}=\sum_{k=0}^{n}\mathbf{D}_{\psi}^{n+1-k}f(x)\rho\superbinom{n}{k}_{\ast}\mathbf{D}_{\psi}^{k}g(x)
    \hspace{0em}+\sum_{k=1}^{n+1}\mathbf{D}_{\psi}^{n+1-k}f(x)\sigma\superbinom{n}{k-1}_{\ast}\mathbf{D}_{\psi}^{k}g(x)\\
    &\hspace{3em}=\mathbf{D}_{\psi}^{n+1}f(x)\rho\superbinom{n}{0}_{\ast}g(x)
    \hspace{0em}+\sum_{k=1}^{n}\mathbf{D}_{\psi}^{n+1-k}f(x)\rho\superbinom{n}{k}_{\ast}\mathbf{D}_{\psi}^{k}g(x)\\
    &\hspace{9em}+\sum_{k=1}^{n}\mathbf{D}_{\psi}^{n+1-k}f(x)\sigma\superbinom{n}{k-1}_{\ast}\mathbf{D}_{\psi}^{k}g(x)
    \hspace{0em}+f(x)\sigma\superbinom{n}{n}_{\ast}\mathbf{D}_{\psi}^{n+1}g(x).
\end{align*}
As 
$$\rho\superbinom{n}{0}_{\ast}=\rho(\ast_{\infty,\infty})=\superbinom{n+1}{0}_{\ast}$$ 
and
$$\sigma\superbinom{n}{n}_{\ast}=\sigma\left(\prod_{i=1}^{n}\ast_{i,0}\right)=\prod_{i=1}^{n}\ast_{i+1,0}\ast_{1,0}=\superbinom{n+1}{n+1}_{\ast},$$
then
\begin{align*}
    &\mathbf{D}_{\psi}^{n+1}(f(x)g(x))\\
    &\hspace{3em}=\mathbf{D}_{\psi}^{n+1}f(x)\superbinom{n+1}{0}_{\ast}g(x)
    \hspace{0em}+\sum_{k=1}^{n}\mathbf{D}_{\psi}^{n+1-k}f(x)\rho\superbinom{n}{k}_{\ast}\mathbf{D}_{\psi}^{k}g(x)\\
    &\hspace{5em}+\sum_{k=1}^{n}\mathbf{D}_{\psi}^{n+1-k}f(x)\sigma\superbinom{n}{k-1}_{\ast}\mathbf{D}_{\psi}^{k}g(x)
    \hspace{0em}+f(x)\superbinom{n+1}{n+1}_{\ast}\mathbf{D}_{\psi}^{n+1}g(x)\\
    &\hspace{3em}=\superbinom{n+1}{0}_{\ast}(\mathbf{D}_{\psi}^{n+1}f(x),g(x))\\
    &\hspace{5em}+\sum_{k=1}^{n}\left(\rho\superbinom{n}{k}_{\ast}
    \boxplus\sigma\superbinom{n}{k-1}_{\ast}\right)(\mathbf{D}_{\psi}^{n+1-k}f(x),\mathbf{D}_{\psi}^{k}g(x))\\
    &\hspace{5em}+\superbinom{n+1}{n+1}_{\ast}(f(x),\mathbf{D}_{\psi}^{n+1}g(x)).
\end{align*}
Then by definition of $\superbinom{n}{k}_{\ast}$ we get to
\begin{align*}
&\mathbf{D}_{\psi}^{n+1}(f(x)g(x))\\
    &\hspace{3em}=\superbinom{n+1}{0}_{\ast}(\mathbf{D}_{\psi}^{n+1}f(x),g(x))
    \hspace{0em}+\sum_{k=1}^{n}\superbinom{n+1}{k}_{\ast}(\mathbf{D}_{\psi}^{n+1-k}f(x),\mathbf{D}_{\psi}^{k}g(x))\\
    &\hspace{9em}+\superbinom{n+1}{n+1}_{\ast}(f(x),\mathbf{D}_{\psi}^{n+1}g(x))\\
    &\hspace{3em}=\sum_{k=0}^{n+1}\superbinom{n+1}{k}_{\ast}(\mathbf{D}_{\psi}^{n+1-k}f(x),\mathbf{D}_{\psi}^{k}g(x))
\end{align*}
as it was intended to be shown.
\end{proof}

In the following propositions we will find special values of $\superbinom{n}{k}$. The proofs will be done by induction on $n$.

\begin{proposition}\label{prop_n_1}
For all $n\geq1$
\begin{equation}
    \superbinom{n}{1}_{\ast}=\bigboxplus_{i=0}^{n-1}\ast_{i+1,i}.
\end{equation}
\end{proposition}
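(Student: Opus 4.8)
The plan is to proceed by induction on $n$, in the same spirit as the proof of Theorem~\ref{theo_gen_leibniz}, using the recurrence (\ref{eqn_recu_sbinom}) that defines $\superbinom{n}{k}_{\ast}$ together with the linearity of the maps $\rho$ and $\sigma$ on $\C\{\ast_{i,j}\}$.

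For the base case $n=1$, the ``$k=n$'' clause of Definition~\ref{def_ope_binom} gives $\superbinom{1}{1}_{\ast}=\prod_{i=1}^{1}\ast_{i,0}=\ast_{1,0}$, which is exactly $\bigboxplus_{i=0}^{0}\ast_{i+1,i}$.

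For the inductive step, assume the identity holds for some $n\geq1$. Because $n+1\geq2$ and $1\leq 1\leq(n+1)-1$, the recurrence (\ref{eqn_recu_sbinom}) applies to $\superbinom{n+1}{1}_{\ast}$ and yields
\begin{equation*}
\superbinom{n+1}{1}_{\ast}=\rho\superbinom{n}{1}_{\ast}\boxplus\sigma\superbinom{n}{0}_{\ast}.
\end{equation*}
Feeding the inductive hypothesis into the first summand and using that $\rho$ is linear and sends each $\ast_{i+1,i}$ to $\ast_{i+2,i+1}$, one gets $\rho\superbinom{n}{1}_{\ast}=\bigboxplus_{i=0}^{n-1}\ast_{i+2,i+1}=\bigboxplus_{i=1}^{n}\ast_{i+1,i}$. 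For the second summand, $\superbinom{n}{0}_{\ast}=\ast_{\infty,\infty}$ is the unit of the concatenation algebra, i.e. the empty product, so the defining formula for $\sigma$ with $l=0$ (only the trailing factor $\ast_{1,0}$ survives) gives $\sigma\superbinom{n}{0}_{\ast}=\ast_{1,0}$. Hence
\begin{equation*}
\superbinom{n+1}{1}_{\ast}=\left(\bigboxplus_{i=1}^{n}\ast_{i+1,i}\right)\boxplus\ast_{1,0}=\bigboxplus_{i=0}^{n}\ast_{i+1,i},
\end{equation*}
which is the assertion for $n+1$ and closes the induction.

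The only step that needs care is the evaluation $\sigma\superbinom{n}{0}_{\ast}=\ast_{1,0}$: this rests on reading $\ast_{\infty,\infty}$ as the empty concatenation in $\C\{\ast_{i,j}\}$, a convention already implicitly used in the proof of Theorem~\ref{theo_gen_leibniz} when computing $\sigma\superbinom{n}{n}_{\ast}$ and $\rho\superbinom{n}{0}_{\ast}$. Once this is granted, the whole argument is just the index bookkeeping of the shift $i\mapsto i+1$ produced by $\rho$, so no further obstacle is expected; alternatively, the first rows of Table~\ref{tab:my_label} can be used to double-check the normalisation.
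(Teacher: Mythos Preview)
Your proof is correct and follows essentially the same inductive argument as the paper: apply the recurrence (\ref{eqn_recu_sbinom}), use linearity of $\rho$ to shift indices, and evaluate $\sigma\superbinom{n}{0}_{\ast}=\ast_{1,0}$. The only cosmetic difference is that the paper computes $\sigma(\ast_{\infty,\infty})=\ast_{\infty,\infty}\ast_{1,0}=\ast_{1,0}$ by treating $\ast_{\infty,\infty}$ as a single factor fixed by the shift, whereas you read it as the empty concatenation; both conventions yield the same value.
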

\begin{proof}
Suppose that $\superbinom{n}{1}_{\ast}=\boxplus_{i=0}^{n-1}\ast_{i+1,i}$. Using the Definition \ref{def_ope_binom} we obtain.
\begin{eqnarray*}
    \superbinom{n+1}{1}_{\ast}(f,g)&=&
    \left(\sigma\superbinom{n}{0}_{\ast}\boxplus\rho\superbinom{n}{1}_{\ast}\right)(f,g)\\
    &=&\left(\sigma(\ast_{\infty,\infty})\boxplus\rho\left(\bigboxplus_{i=0}^{n-1}\ast_{i+1,i}\right)\right)(f,g)\\
    &=&\left(\ast_{\infty,\infty}\ast_{1,0}\boxplus\bigboxplus_{i=0}^{n-1}\ast_{i+2,i+1}\right)(f,g)\\
    &=&\left(\ast_{1,0}\boxplus\bigboxplus_{i=1}^{n}\ast_{i+1,i}\right)(f,g)\\
    &=&\left(\bigboxplus_{i=0}^{n}\ast_{i+1,i}\right)(f,g).
\end{eqnarray*}
Then $\superbinom{n+1}{1}_{\ast}=\bigboxplus_{i=0}^{n}\ast_{i+1,i}$.
\end{proof}

\begin{proposition}\label{prop_n_2}
For all $n\geq2$
\begin{equation}
    \superbinom{n}{2}_{\ast}=\bigboxplus_{i=0}^{n-2}\bigboxplus_{j=0}^{n-2-i}\ast_{i+j+2,i+j}\ast_{i+1,i}.
\end{equation}
\end{proposition}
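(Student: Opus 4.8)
The plan is to prove Proposition~\ref{prop_n_2} by induction on $n$, exactly mirroring the proof of Proposition~\ref{prop_n_1}, using the defining recurrence (\ref{eqn_recu_sbinom}) together with Proposition~\ref{prop_n_1} for the already-known value of $\superbinom{n}{1}_\ast$. The base case $n=2$ is read off Table~\ref{tab:my_label}: $\superbinom{2}{2}_\ast=\ast_{1,0}\ast_{2,0}$, which matches the right-hand side since the double sum $\bigboxplus_{i=0}^{0}\bigboxplus_{j=0}^{0}$ has the single term $\ast_{i+j+2,i+j}\ast_{i+1,i}=\ast_{2,0}\ast_{1,0}$ (and concatenation order is immaterial for the kernel, so $\ast_{2,0}\ast_{1,0}=\ast_{1,0}\ast_{2,0}$ as products).

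For the inductive step I would assume
$$\superbinom{n}{2}_\ast=\bigboxplus_{i=0}^{n-2}\bigboxplus_{j=0}^{n-2-i}\ast_{i+j+2,i+j}\ast_{i+1,i}$$
and compute $\superbinom{n+1}{2}_\ast$ from (\ref{eqn_recu_sbinom}), namely $\superbinom{n+1}{2}_\ast=\rho\superbinom{n}{2}_\ast\boxplus\sigma\superbinom{n}{1}_\ast$. First apply $\rho$ termwise to the induction hypothesis: $\rho$ raises every pair of subscripts by one, so $\rho\bigl(\ast_{i+j+2,i+j}\ast_{i+1,i}\bigr)=\ast_{i+j+3,i+j+1}\ast_{i+2,i+1}$; reindexing $i\mapsto i-1$ turns the range $0\le i\le n-2$ into $1\le i\le n-1$ and yields $\bigboxplus_{i=1}^{n-1}\bigboxplus_{j=0}^{n-1-i}\ast_{i+j+2,i+j}\ast_{i+1,i}$. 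Then apply $\sigma$ to $\superbinom{n}{1}_\ast=\bigboxplus_{j=0}^{n-1}\ast_{j+1,j}$ (using Proposition~\ref{prop_n_1}): $\sigma$ raises only the first subscript by one and appends $\ast_{1,0}$, giving $\bigboxplus_{j=0}^{n-1}\ast_{j+2,j+1}\ast_{1,0}$; reindexing $j\mapsto j-1$ this is $\bigboxplus_{j=1}^{n}\ast_{j+1,j}\ast_{1,0}$, which is exactly the $i=0$ slice of the desired target (the $j=0$ term $\ast_{2,0}\ast_{1,0}$ is missing — I will keep track of this).

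The last step is to combine the two pieces and recognize the result as $\bigboxplus_{i=0}^{n-1}\bigboxplus_{j=0}^{n-1-i}\ast_{i+j+2,i+j}\ast_{i+1,i}$. The $\rho$-part supplies all terms with $i\ge 1$, and the $\sigma$-part supplies the terms with $i=0$ and $j\ge 1$. I expect the main bookkeeping obstacle to be the boundary term $i=j=0$, i.e.\ $\ast_{2,0}\ast_{1,0}$: it must appear in $\superbinom{n+1}{2}_\ast$ but seems absent from both $\rho\superbinom{n}{2}_\ast$ (whose indices all have $i\ge1$) and the reindexed $\sigma\superbinom{n}{1}_\ast$ (whose indices all have $j\ge1$). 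Resolving this requires being careful that $\sigma\superbinom{n}{1}_\ast$ before reindexing already contains the $j=0$ term $\ast_{2,1}\ast_{1,0}$; the correct matching is that $\sigma$ applied to the full sum $\bigboxplus_{j=0}^{n-1}\ast_{j+1,j}$ gives $\bigboxplus_{j=0}^{n-1}\ast_{j+2,j+1}\ast_{1,0}=\bigboxplus_{i=0}^{0}\bigboxplus_{j=1}^{n}\ast_{i+j+2,i+j}\ast_{i+1,i}$ only after noting that the $i=0$ row of the target runs over $0\le j\le n-1$, so in fact the target's $i=0$ row is $\bigboxplus_{j=0}^{n-1}\ast_{j+2,j}\ast_{1,0}$ — I would recheck the index shift here, since the discrepancy signals that the correct reindexing of $\sigma\superbinom{n}{1}_\ast$ should land on $j$ from $0$ to $n-1$, not $1$ to $n$. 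Once that shift is pinned down correctly the two contributions tile the target double sum with no overlap and no omission, completing the induction.
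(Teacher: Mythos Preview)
Your approach is exactly the paper's: induct on $n$, invoke the recurrence $\superbinom{n+1}{2}_\ast=\rho\superbinom{n}{2}_\ast\boxplus\sigma\superbinom{n}{1}_\ast$, reindex the $\rho$-piece by $i\mapsto i-1$, and identify the $\sigma$-piece with the $i=0$ row of the target. The only problem is a computational slip in applying $\sigma$, which then generates the phantom ``missing boundary term'' you worry about.

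Recall that $\sigma(\ast_{a,b})=\ast_{a+1,b}\ast_{1,0}$: only the \emph{first} subscript is raised, the second stays put. You state this correctly in words but then write $\sigma(\ast_{j+1,j})=\ast_{j+2,\,j+1}\ast_{1,0}$; the correct result is $\ast_{j+2,\,j}\ast_{1,0}$. With this fix,
\[
\sigma\superbinom{n}{1}_\ast=\bigboxplus_{j=0}^{n-1}\ast_{j+2,\,j}\ast_{1,0},
\]
which is \emph{already} the $i=0$ row of the target $\bigboxplus_{j=0}^{(n-1)-0}\ast_{0+j+2,\,0+j}\ast_{0+1,\,0}$ without any reindexing in $j$. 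The term $\ast_{2,0}\ast_{1,0}$ (the case $i=j=0$) sits right there at $j=0$, so nothing is missing. Your $\rho$-computation and its reindexing are correct and supply all rows $i\ge1$. Once the $\sigma$-slip is repaired, the two pieces tile the target exactly and the induction closes; this is precisely the paper's argument.
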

\begin{proof}
Suppose true that 
$$\superbinom{n}{2}_{\ast}=\bigboxplus_{i=0}^{n-2}\bigboxplus_{j=0}^{n-2-i}\ast_{i+j+2,i+j}\ast_{i+1,i}.$$ Then
\begin{align*}
    \superbinom{n+1}{2}_{\ast}(f,g)
    &=\left(\sigma\superbinom{n}{1}_{\ast}\boxplus\rho\superbinom{n}{2}_{\ast}\right)(f,g)\\
    &=\left(\sigma\left(\bigboxplus_{j=0}^{n-1}\ast_{j+1,j}\right)\boxplus\rho\left(\bigboxplus_{i=0}^{n-2}\bigboxplus_{j=0}^{n-2-i}\ast_{i+j+2,i+j}\ast_{i+1,i}\right)\right)(f,g)\\
    &=\left(\bigboxplus_{j=0}^{n-1}\ast_{j+2,j}\ast_{1,0}\boxplus\bigboxplus_{i=0}^{n-2}\bigboxplus_{j=0}^{n-2-i}\ast_{i+j+3,i+j+1}\ast_{i+2,i+1}\right)(f,g)\\
    &=\left(\bigboxplus_{j=0}^{n-1}\ast_{j+2,j}\ast_{1,0}\boxplus\bigboxplus_{i=1}^{n-1}\bigboxplus_{j=0}^{n-1-i}\ast_{i+j+2,i+j}\ast_{i+1,i}\right)(f,g)\\
    &=\left(\bigboxplus_{i=0}^{n-1}\bigboxplus_{j=0}^{n-1-i}\ast_{i+j+2,i+j}\ast_{i+1,i}\right)(f,g).
\end{align*}
Thus $\superbinom{n+1}{2}_{\ast}=\bigboxplus_{i=0}^{n-1}\bigboxplus_{j=0}^{n-1-i}\ast_{i+j+2,i+j}\ast_{i+1,i}$ and the result is true for all $n$.
\end{proof}


When $\superbinom{n}{k}_{\ast}$ is a product in $\ward_{\C}[[x]]$, then
\begin{equation*}
    \superbinom{n}{k}_{\ast}(f,g)=\binom{n}{k}fg.
\end{equation*}
and thus we recover the general ordinary Leibniz formula. In the following result we show that it is also possible to recover the $q$-analogue of that rule.
\begin{theorem}
If $\psi_{n}=[n]$, then 
\begin{equation}
    \superbinom{n}{k}_{\ast}(f(x),g(x))=\binom{n}{k}_{q}f(q^{k}x)g(x)
\end{equation}
\end{theorem}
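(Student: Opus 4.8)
The plan is to evaluate everything in the algebra $\C\{\ast_{i,j}\}$ at $\psi_n=[n]$. There we already have $f\ast_{i,j}g=q^{j}f(qx)g(x)$, hence, as in the computation right after Theorem \ref{theo_der_prod_mu_}, a concatenation monomial $m=\ast_{a_1,b_1}\cdots\ast_{a_l,b_l}$ acts by $m(f,g)=q^{b_1+\cdots+b_l}f(q^{l}x)g(x)$. To such a monomial assign its \emph{length} $\ell(m)=l$ and its \emph{weight} $\beta(m)=b_1+\cdots+b_l$; the point is that after specialization $m$ depends only on $\ell(m)$ and $\beta(m)$, not on the first indices $a_i$. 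So first I would record how $\rho$ and $\sigma$ act on these data: $\rho$ raises every $b_i$ by one and preserves length, giving $\ell(\rho m)=\ell(m)$ and $\beta(\rho m)=\beta(m)+\ell(m)$; while $\sigma$ raises the first indices by one (invisible after specialization) and appends $\ast_{1,0}$, giving $\ell(\sigma m)=\ell(m)+1$ and $\beta(\sigma m)=\beta(m)$.

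Next I would check, by an easy induction on $n$ from (\ref{eqn_recu_sbinom}) together with $\superbinom{n}{0}_{\ast}=\ast_{\infty,\infty}$ and $\superbinom{n}{n}_{\ast}=\prod_{i=1}^{n}\ast_{i,0}$, that every monomial in the $\boxplus$-expansion of $\superbinom{n}{k}_{\ast}$ has length exactly $k$ (reading $\ast_{\infty,\infty}$ as the empty monomial, of length $0$): $\rho$ preserves length, $\sigma$ raises it by one, $\superbinom{n}{0}_{\ast}$ has length $0$, and $\superbinom{n}{n}_{\ast}$ has length $n$. Hence, after specialization, $\superbinom{n}{k}_{\ast}(f,g)=c_{n,k}\,f(q^{k}x)g(x)$ with $c_{n,k}=\sum_{m}q^{\beta(m)}$, the sum over the monomials of $\superbinom{n}{k}_{\ast}$ counted with multiplicity. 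From $\superbinom{n}{0}_{\ast}=\ast_{\infty,\infty}$ we get $c_{n,0}=1$, and since every factor of $\prod_{i=1}^{n}\ast_{i,0}$ has second index $0$ we get $c_{n,n}=1$. For $1\le k\le n-1$, applying $\rho$ and $\sigma$ to the length-$k$ (resp. length-$(k-1)$) monomials occurring in (\ref{eqn_recu_sbinom}) multiplies $q^{\beta}$ by $q^{k}$ in the first term and leaves it unchanged in the second, so summing gives $c_{n,k}=q^{k}c_{n-1,k}+c_{n-1,k-1}$. With $\psi_n=[n]$ (so $F(n,k)=q^{k}$) this is exactly the Fontané recurrence (\ref{eqn_psi_binom}) for $\binom{n}{k}_{q}$, with the same boundary values, so $c_{n,k}=\binom{n}{k}_{q}$ by induction on $n$ — which is the claim.

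The only delicate point is the interaction of $\sigma$ with the specialization: one must be sure that under $\psi_n=[n]$ the operator attached to a monomial really does forget its first indices, so that the shift of those indices by $\sigma$ contributes nothing and only the appended $\ast_{1,0}$, which increases length by one at weight $0$, survives. That is what turns the operator recurrence (\ref{eqn_recu_sbinom}) into the scalar $q$-Pascal recurrence; once length-homogeneity of $\superbinom{n}{k}_{\ast}$ is in hand, the rest is the usual $q$-binomial induction and poses no difficulty.
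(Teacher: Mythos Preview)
Your argument is correct. The key observation---that after specializing to $\psi_n=[n]$ a concatenation monomial $\ast_{a_1,b_1}\cdots\ast_{a_l,b_l}$ acts only through its length $l$ and weight $b_1+\cdots+b_l$---is exactly right, and your bookkeeping of how $\rho$ and $\sigma$ act on $(\ell,\beta)$ is accurate (including the convention that $\ast_{\infty,\infty}$ is the empty monomial, which matches $\rho(\ast_{\infty,\infty})=\ast_{\infty,\infty}$ and $\sigma(\ast_{\infty,\infty})=\ast_{1,0}$ used elsewhere in the paper). The length-homogeneity induction and the resulting scalar recurrence $c_{n,k}=q^{k}c_{n-1,k}+c_{n-1,k-1}$ are both clean, and the identification with~(\ref{eqn_psi_binom}) for $\psi_n=[n]$ closes the argument.

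Your route differs from the paper's. The paper does not pass directly to a scalar recurrence; instead it iterates~(\ref{eqn_recu_sbinom}) along the $\sigma$-branch to obtain
\[
\superbinom{n}{k}_{\ast}=\bigboxplus_{i=1}^{k}\sigma^{i-1}\rho\superbinom{n-i}{k-i+1}_{\ast}\boxplus\sigma^{k}\superbinom{n-k}{0}_{\ast},
\]
evaluates each summand separately (using the explicit formulas of Propositions~\ref{prop_n_1} and~\ref{prop_n_2} as a guide to the general pattern $\sigma^{i-1}\rho\superbinom{n-i}{k-i+1}_{\ast}(f,g)=q^{k-i+1}\binom{n-i}{k-i+1}_{q}f(q^{k}x)g(x)$), and then sums the resulting $q$-binomials via repeated use of~(\ref{eqn_psi_binom}). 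Your approach is shorter and more structural: by abstracting the two invariants $(\ell,\beta)$ you turn the operator recurrence directly into the $q$-Pascal recurrence, bypassing the explicit unrolling and the case-by-case evaluation. The paper's approach, on the other hand, makes the connection with the closed forms for $\superbinom{n}{1}_{\ast}$ and $\superbinom{n}{2}_{\ast}$ more visible.
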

\begin{proof}
Using recursively the Eq.(\ref{eqn_recu_sbinom}) we have
\begin{equation*}\label{eqn_recu}
    \superbinom{n}{k}_{\ast}=\bigboxplus_{i=1}^{k}\sigma^{i-1}\rho\superbinom{n-i}{k-i+1}_{\ast}\boxplus\sigma^{k}\superbinom{n-1}{0}_{\ast}.
\end{equation*}
First we calculate $\sigma^{k}\superbinom{n-1}{0}_{\ast}$. We have
\begin{eqnarray*}
    \sigma^{k}\superbinom{n-1}{0}_{\ast}(f(x),g(x))&=&(\sigma^{k}\ast_{\infty,\infty})(f(x),g(x))\\
    &=&(\sigma^{k-1}\ast_{1,0})(f(x),g(x))\\
    &=&(\sigma^{k-2}\ast_{2,0}\ast_{1,0})(f(x),g(x))\\
    &=&\left(\prod_{i=1}^{k}\ast_{i,0}\right)(f(x),g(x))\\
    &=&f(q^{k}x)g(x).
\end{eqnarray*}
By a similar calculation using Propositions \ref{prop_n_1} and \ref{prop_n_2} it is proved that
\begin{eqnarray*}
\sigma^{k-1}\rho\superbinom{n-k}{1}_{\ast}(f(x),g(x))&=&q[n]f(q^{k}x)g(x),\\ \sigma^{k-2}\rho\superbinom{n-k+1}{2}_{\ast}(f(x),g(x))&=&q^{2}\binom{n-k+1}{2}_{q}f(q^{k}x)g(x),
\end{eqnarray*}
and in general
\begin{equation*}
    \rho\superbinom{n-1}{k}_{\ast}(f(x),g(x))=q^{k}\binom{n-1}{k}_{q}f(q^{k}x)g(x).
\end{equation*}
Now for Eq.(\ref{eqn_recu})
\begin{eqnarray*}
\superbinom{n}{k}_{\ast}(f(x),g(x))&=&\left(\sum_{i=1}^{k}q^{k-i+1}\binom{n-i}{k-i+1}_{q}+\binom{n-1}{0}_{q}\right)f(q^{k}x)g(x)\\
&=&\binom{n}{k}_{q}f(q^{k}x)g(x)
\end{eqnarray*}
where we have used recursively Eq.(\ref{eqn_psi_binom}) with $\psi_{n}=[n]$. 
\end{proof}
From the above theorem and Theorem \ref{theo_gen_leibniz} it follows that
\begin{align*}
    \mathbf{D}_{q}^{n}(f(x)g(x))&=\sum_{k=0}^{n}\superbinom{n}{k}_{\ast}(\mathbf{D}_{q}^{n-k}f(x),\mathbf{D}_{q}^{k}g(x))\\
    &=\sum_{k=0}^{n}\binom{n}{k}_{q}\mathbf{D}_{q}^{n-k}f(q^{k}x)\mathbf{D}_{q}^{k}g(x).
\end{align*}

We conclude by showing the following example.
\begin{example}
We want to calculate the $n$-th derivative of the function $xe_{\psi}^{x}$, where
\begin{equation*}
    e_{\psi}^{x}=\sum_{n=0}^{\infty}\frac{x^n}{\psi_{n}!}.
\end{equation*}
Using the Theorem \ref{theo_gen_leibniz} we have
\begin{eqnarray*}
    \mathbf{D}_{\psi}^{n}(xe_{\psi}^{x})&=&\sum_{k=0}^{n}\superbinom{n}{k}_{\ast}(\mathbf{D}_{\psi}^{n-k}x,\mathbf{D}_{\psi}^{k}e_{\psi}^{x})\\
    &=&\superbinom{n}{n}_{\ast}(x,\mathbf{D}_{\psi}^{n}e_{\psi}^{x})+\superbinom{n}{n-1}_{\ast}(\mathbf{D}_{\psi}x,\mathbf{D}_{\psi}^{n-1}e_{\psi}^{x})\\
    &=&x\left(\prod_{i=1}^{n}\ast_{i,0}\right)e_{\psi}^{x}+1\superbinom{n}{n-1}_{\ast}e_{\psi}^{x}.
\end{eqnarray*}
When $\psi_{m}=m$, the $n$-th derivative of $xe^{x}$ is $\mathbf{D}^{n}(xe^{x})=(n+x)e^{x}$. When $\psi_{m}=[m]$, then. 
$\mathbf{D}_{q}^{n}(xe_{q}^{x})=(q^{n}x+[n])e_{q}^{x}$.
\end{example}

\section{Conclusions and Perspectives}
As we could notice in this paper, it was necessary to construct the Ward-Fonten\'e universal differential algebra in order to find an $\psi$-analog of the product rule and of the general Leibniz rule. These $\psi$-analogues are compatible with the existing product rules in Newton's calculus, the $q$-calculus and with the $(p,q)$-calculus and allow us to obtain the respective rules in the simplicial polytopic calculus and in the calculus on Bell numbers and in general any other calculus on sequences. The main purpose for introducing the simplicial polytopic calculus is because of its relation to special functions. For example, the equation $\mathbf{D}_{T}y=y$ has solution
\begin{equation*}
    y(x)=c_{1}\frac{I_{1}(2\sqrt{2}\sqrt{x})}{\sqrt{2}\sqrt{x}}+c_{2}\frac{\sqrt{2}K_{1}(2\sqrt{2}\sqrt{x})}{\sqrt{x}},
\end{equation*}
where $I_{n}(x)$ and $K_{n}(x)$ are the modified Bessel functions of the first and second kind, respectively. Then the triangular numbers are related to the modified Bessel functions. Moreover the $\mathrm{T}$-analog of the exponential function is the function $y(x)$ above and we may then be interested in studying the $T$-analogs of a huge list of applications of the ordinary exponential function and the relation of these analogs to the modified Bessel functions. Similarly, we find that the solutions of the differential equations $\mathbf{D}_{S_{r}}y=y$, $r\geq3$, are related to the Meijer $G$-functions. On the other hand, it will be very interesting to find the solution of the equation $\mathbf{D}_{B}y=y$ in terms of known functions, as well as possible applications of this solution. Since Bell numbers have their main applications in Combinatorics, we think that a calculus on these numbers has mainly applications in this area of mathematics. Finally, we want to study properties and applications of the binomial operator in $\C_{\psi}^{\ast}$ that are analogous to those of the ordinary binomial coefficients.


\end{document}